\newtheorem{theorem}{Theorem}[section]
\newtheorem{proposition}[theorem]{Proposition}
\newtheorem{lemma}[theorem]{Lemma}
\newtheorem{remark}[theorem]{Remark}
\newtheorem{thmy}{Theorem}
\newcommand{\vbgs}[4]{
\xymatrix{#1 \ar[d]_{\tilde p} \ar@<2pt>[r] \ar@<-2pt>[r] & #2 \ar[d]^p \\ #3 \ar@<2pt>[r] \ar@<-2pt>[r] & #4}}
\begin{document}
\newcommand{\vbal}[4]{
\xymatrix{#1 \ar[d]_{\tilde p} \ar[r] & #2 \ar[d]^p \\ #3 \ar[r]  & #4}}
\newcommand\lie{\mathfrak}
\newenvironment{dedication}
  {
   \thispagestyle{empty}
   \itshape             
   \raggedleft          
  }
  {\par 
  }

\title{\textbf{Geometric study of non-constant vector fields making hyperbolic space Ricci-Bourguignon solitons}}
\author{ Mafal Ndiaye Diop\thanks{ Address:
    D\'epartement de Math\'ematiques et Informatique, FST, Universit\'e Cheikh Anta Diop, Dakar,  S\'en\'egal, Email: mafaldiop95@gmail.com},
   Abdou Bousso\thanks{D\'epartement de Math\'ematiques et Informatique, FST, Universit\'e Cheikh Anta Diop, Dakar,  S\'en\'egal,      Email:abdoukskbousso@gmail.com}, 
   Cheikh Khoul\'e \thanks{Address:CERER, Universit\'e Cheikh Anta Diop, and  Dakar,  S\'en\'egal,
    Email:cheikh1.khoule@ucad.edu.sn}, \\ and 
    Ameth Ndiaye \thanks{Address:D\'epartement de Mathematiques, FASTEF, Universit\'e Cheikh Anta Diop,  Dakar,  S\'en\'egal,
    Email:ameth1.ndiaye@ucad.edu.sn} }

\date{}
\maketitle

\begin{abstract}
The objective of this paper is to deepen the study of vector fields on hyperbolic spaces $\mathbb{H}^n$ that transform them into a Ricci-Bourguignon soliton. Starting from a recent work in \cite{bousso2025ricci} which characterizes these fields as Killing fields of a specific shape, we propose a detailed geometric study of their structure and behavior in the dimensions $n=2, 3$ and $n\geq 3$. In odd dimension we show that the dual form of these vectors are contact forms. This work aims to enrich the understanding of self-similar solutions of the Ricci flow in a more general context.
\end{abstract}
    
\noindent {\bf Keywords:} {\small Ricci soliton, Ricci-Bourguignon, soliton, Hyperbolic space, Lie algebra, Killing vector.}

\noindent {\bf Mathematics Subject Classification 2010}: 37C35, 37D40.

\tableofcontents

\section{Introduction}
The Ricci flow, introduced by Hamilton \cite{Hamilton}, is a fundamental tool in differential geometry for studying the deformation of Riemannian metrics on a manifold. A particularly important class of solutions to this flow are Ricci solitons, which represent self-similar solutions. A vector field, often referred to as a soliton vector field, plays a crucial role in the definition of these structures, which can be seen as critical points for the Ricci flow on a manifold. Later, in their research, Catino et al. \cite{Bourguignon} developed the Ricci-Bourguignon flow into parabolic theory, which Bourguignon \cite{Catino} had first studied. \\
In this field, hyperbolic spaces \cite{fasihi2025hyperbolic}, of which the Poincar\'e half-plane is an example in dimension $n=2$, are geometrical spaces of great interest. Their non-Euclidean geometry provides a rich framework for the exploration of various geometrical equations. The study of Ricci solitons on these spaces has been the subject of several recent works, which have shed light on the nature of these solutions in a non-compact setting. In their paper, Fasihi-Ramandi et al. \cite{fasihi2025hyperbolic} established a remarkable result by showing that the set of vector fields that transform the Poincar\'e semiplane into a Ricci soliton is isomorphic to the Lie algebra $\text{sl}_2(\mathbb{R})$. Relatedly, the first and the third authors \cite{bousso2025ricci} have recently shown that the vector fields that make $\mathbb{H}^n$ a Ricci-Bourguignon soliton are Killing fields and have a precise algebraic form. \\
The rest of this article is organized as follows. In the \ref{sec:rappel} section, we recall the key definitions and equations of classical Ricci and Ricci-Bourguignon solitons. We also present the results of Bousso and Ndiaye. Sections \ref{sec:n2}, \ref{sec:n3} and \ref{sec:n4} are devoted to the study of soliton vector fields on hyperbolic spaces in dimension $n=2$, $n=3$ and $n\geq 3$, respectively.

\section{Preliminaries}\label{sec:rappel}
We begin by recalling the definition of a Ricci-Bourguignon soliton. Bourguignon introduced the Ricci-Bourguignon flow given by the equation
\begin{eqnarray}\label{BF}
\frac{\partial g}{\partial t} = -2(S - \rho r g), \quad g(0) = g_0
\end{eqnarray}
where $\rho$ is a non-zero real number, $S$ is Ricci tensor and $r$ is the scalar curvature. Ricci Bourguignon soliton (abbrivated as RB-soliton) is self similar solution of RB-flow and is defined on pseudo(semi)-Riemannian or Riemannaian manifold as follows:
\begin{equation} \label{eq:1.2}
(\mathcal{L}_V g)(X, Y ) = -2S(X, Y ) + 2(\lambda + \rho r)g(X, Y ),
\end{equation}
where $\mathcal{L}_V$ is Lie-derivative along $V$, $\lambda$ is soliton constant. If $\rho = 0$ then \eqref{eq:1.2} defines Ricci soliton. If the vector field $V$ is gradient of potential function $u$ then $g$ is said to be \textbf{gradient Ricci--Bourguignon soliton} and the equation (1.2) takes the form
\begin{equation}
\label{eq:1.3}
\text{Hess}u = -S(X, Y ) + (\lambda + \rho r)g(X, Y ),
\end{equation}
where $\text{Hess}u$ denotes the Hessian of smooth function $u$ on $M$ and is defined by $\text{Hess}u = \nabla\nabla u$. Recently many authors studied RB-soliton on three-dimensional contact metric manifold (\cite{3, 4, 8}). Pathra, Ali and Mofarreh[8] studied the RB-soliton and gradient RB-soliton on $k$-paracontact, $(k, \mu)$-paracontact and para-Sasakian manifolds.

A vector field $V$ on a pseudo-Riemannian manifold $M$ is \textbf{affine conformal}, \textbf{conformal}, \textbf{Jacobi type} and \textbf{torse-forming} vector field if([11], [18], [6])
\begin{align}
\label{eq:1.4}
(\mathcal{L}_V \nabla)(X, Y ) &= (Xf)Y + (Y f)X - g(X, Y )Df, \\
\label{eq:1.5}
(\mathcal{L}_V g)(X, Y ) &= 2\delta g(X, Y ), \\
\label{eq:1.6}
\nabla_X\nabla_Y V - \nabla_{\nabla_X Y} V + R(V, X)Y &= 0, \\
\label{eq:1.7}
\nabla_X V &= fX + \alpha(X)V,
\end{align}
where $X$, $Y$ and $Z$ are smooth vector fields on $M$, $\nabla$ is Levi-Civita connection on $M$, $f$ and $\delta$ are smooth functions on $M$ and $\alpha$ is a 1-form on $M$.

If $\alpha = 0$ in (\ref{eq:1.7}) then $V$ is \textbf{concircular}, $\alpha = 0$ and $f = 1$ in (\ref{eq:1.7}) then $V$ is a \textbf{concurrent vetor field}, and if $\alpha \neq 0$ and $f = 0$ in (\ref{eq:1.7}) then $V$ is \textbf{recurrent}.

If a $(0, 2)$-tensor $B$ satisfies the following
\begin{equation}
\label{eq:1.8}
(\nabla_X B)(Y, Z) + (\nabla_Y B)(Z, X) + (\nabla_Z B)(X, Y ) = k(X)g(Y, Z) + k(Y )g(Z, X) + k(Z)g(X, Y )
\end{equation}
for any vector fields $X$, $Y$ and $Z$ on $M$, then it is known as the \textbf{conformal quadratic Killing tensor}, which is a generalization of conformal Killing vector field, where $k$ is a 1-form on $M$.

A recent result by Bousso and Ndiaye \cite{bousso2025ricci} shows that the vector field $X$ that make the hyperbolic space $\mathbb{H}^n$ a Ricci-Bourguignon soliton is Killing vector field and is of the form:
\begin{align}\label{X}
X(x_1,\dots,x_n)&=\sum_{k=1}^{n-1}\left(\frac{a_k}{2}\left(x_k^2-\sum_{j\neq k}x_j^2\right)
+\left(\sum_{\substack{i=1\\i\neq k}}^{n-1}a_i\,x_i+b\right)x_k+c_k\right)\frac{\partial}{\partial x_k}\nonumber\\
&\quad+\left(\left(\sum_{k=1}^{n-1}a_k\,x_k+b\right)x_n\right)\frac{\partial}{\partial x_n}.
\end{align}
In this paper we denote by \(\Gamma_n\) the set of non-contant vector fields making $\mathbb{H}^n$  Ricci-Bourguignon soliton.
 
\subsection{Lie algebra ans Lie subalgebra}

A \textbf{Lie algebra} $\mathfrak{g}$ is a vector space on a field $\mathbb{F}$ (here, $\mathbb{R}$) with a bilinear map, denoted $[\cdot, \cdot]$ and called \textbf{Lie bracket}, which satisfies the following properties for any $X, Y, Z \in \mathfrak{g}$ and $\alpha, \beta \in \mathbb{F}$:
\begin{enumerate}
    \item \textbf{Bilinearity}: The bracket is linear in each argument.
    $$[\alpha X + \beta Y, Z] = \alpha [X, Z] + \beta [Y, Z]$$
    $$[X, \alpha Y + \beta Z] = \alpha [X, Y] + \beta [X, Z]$$
    \item \textbf{Antisymmetry}: Reversing the order of elements changes the sign.
    $$[X, Y] = -[Y, X]$$
    \item \textbf{Jacobi Identity}: A fundamental property ensuring the consistency of the algebra's structure.
    $$[X, [Y, Z]] + [Y, [Z, X]] + [Z, [X, Y]] = 0$$
\end{enumerate}

For vector fields $A = \sum\limits_i A_i \frac{\partial}{\partial x_i}$ and $B = \sum\limits_i B_i \frac{\partial}{\partial x_i}$ on a differentiable manifold, the Lie bracket is calculated using the formula:
$$[A, B] = \sum_{j} \left( \sum_{i} \left( A_i \frac{\partial B_j}{\partial x_i} - B_i \frac{\partial A_j}{\partial x_i} \right) \right) \frac{\partial}{\partial x_j}.$$
This definition expresses the non-commutativity of differential operators, which is characteristic of vector fields.

A \textbf{Lie subalgebra} $\mathfrak{h}$ of a Lie algebra $\mathfrak{g}$ is a vector subspace of $\mathfrak{g}$ that is closed under the Lie bracket operation. In other words, for any $X, Y \in \mathfrak{h}$, the bracket $[X, Y]$ also belongs to $\mathfrak{h}$.

In local coordinates, if the vector field is denoted $X = X_i \partial x_i$ for some basis $\{\partial x_i\}$ of $T_pM$, then the corresponding dual 1-form is denoted \begin{equation}\label{bmol}
    \omega = \sum_{i,j=1}^ng_{ij} X_jdx_i ,
\end{equation} where $\{dx_i\}$ is the dual basis to $\{\partial x_i\}$ of $T_p^*M$.

For the hyperbolic half-plane, the dual form of the vector field X defined in $\eqref{X}$ is given by:\begin{align}\label{Xmol}\omega(x_1,\dots,x_n)&=\frac{1}{x_n^2}\sum_{k=1}^{n-1}\left(\frac{a_k}{2}\left(x_k^2-\sum_{j\neq k}x_j^2\right)
+\left(\sum_{\substack{i=1\\i\neq k}}^{n-1}a_i\,x_i+b\right)x_k+c_k\right)d x_k\nonumber\\
&\quad+\frac{1}{x_n}\left(\sum_{k=1}^{n-1}a_k\,x_k+b\right)dx_n.
\end{align}
For the following, the dual form will be denoted \(\omega\).
\subsection{The Lie Algebra of Isometries of $\mathbb{H}^n$}
\textbf{Hyperbolic space $\mathbb{H}^n$} is a fundamental geometric model in non-Euclidean geometry. It has constant negative sectional curvature. The group of its orientation-preserving isometries (transformations that preserve distances) is isomorphic to the connected Lorentz group $SO_+(n,1)$. The associated \textbf{Lie algebra}, denoted $\mathfrak{so}(n,1)$, is the set of real $(n+1) \times (n+1)$ matrices that are antisymmetric with respect to a bilinear form of signature $(n,1)$. The dimension of $\mathfrak{so}(n,1)$ is given by the formula $\frac{n(n+1)}{2}$. The elements of this algebra are precisely the \textbf{Killing fields} of $\mathbb{H}^n$, which are the infinitesimal generators of these isometries.

\section{Mains Results}
\subsection{Ricci-Bourguignon solitons in dimension $n=2$}\label{sec:n2}
\begin{theorem}
    The set \(\Gamma_2\) is  isomorphic to \(sl_2(\mathbb{R})\).
\end{theorem}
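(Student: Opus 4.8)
The plan is to describe $\Gamma_2$ completely explicitly and then recognise its bracket relations. First I would specialise \eqref{X} to $n=2$: the only value of the inner index is $k=1$, so a field in $\Gamma_2$ has the form
$$X \;=\; \Bigl(\tfrac{a}{2}\bigl(x_1^2-x_2^2\bigr)+b\,x_1+c\Bigr)\frac{\partial}{\partial x_1}\;+\;(a\,x_1+b)\,x_2\,\frac{\partial}{\partial x_2},$$
where I write $a=a_1$, $c=c_1$. Reading off the coefficients of the monomials $1, x_1, x_2, x_1^2, x_2^2, x_1x_2$ shows that $(a,b,c)\mapsto X$ is injective, so these fields, together with the zero and constant fields arising when $a=b=0$, form a $3$-dimensional real vector space; a convenient basis is $X_1$ (taking $a=1,\ b=c=0$), $X_2$ (taking $b=1,\ a=c=0$) and $X_3=\partial/\partial x_1$ (taking $c=1,\ a=b=0$).

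Second, I would check closure under the Lie bracket using the coordinate formula for $[A,B]$ recalled in Section~\ref{sec:rappel}. The three generating brackets come out to be
$$[X_3,X_2]=X_3,\qquad [X_3,X_1]=X_2,\qquad [X_2,X_1]=X_1,$$
so every bracket of two fields of the displayed shape is again of that shape; in particular no new monomials such as $x_1^2+x_2^2$ or $x_1x_2$ are produced, which is the one point requiring a (short) direct verification. Hence $\Gamma_2$ is a Lie subalgebra of the Lie algebra of vector fields on $\mathbb{H}^2$.

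Third, I would exhibit the isomorphism. Setting $h:=2X_2$, $e:=X_1$ and $f:=-2X_3$, the relations above become the canonical $\mathfrak{sl}_2$ relations $[h,e]=2e$, $[h,f]=-2f$, $[e,f]=h$; since $\{h,e,f\}$ is a basis of the $3$-dimensional algebra $\Gamma_2$, the linear map carrying $h,e,f$ to the standard generators of $\mathfrak{sl}_2(\mathbb{R})$ is a Lie algebra isomorphism. An alternative, more structural argument: by the Bousso--Ndiaye result each element of $\Gamma_2$ is a Killing field, so $\Gamma_2$ is a Lie subalgebra of $\mathfrak{so}(2,1)$; since $\dim\Gamma_2=3=\dim\mathfrak{so}(2,1)=\tfrac{2\cdot 3}{2}$, we get $\Gamma_2=\mathfrak{so}(2,1)$, and the classical low-dimensional isomorphism $\mathfrak{so}(2,1)\cong\mathfrak{sl}_2(\mathbb{R})$ concludes.

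I do not anticipate a real obstacle here. The only things to be careful about are the bookkeeping in the bracket computation --- making sure $\Gamma_2$ is genuinely closed and that the constant parameter $c$ transforms correctly --- and the reading of the word ``non-constant'': it should be understood as ``of the form \eqref{X}'', so that $\Gamma_2$ is stable under the vector-space operations and it is legitimate to speak of it as a Lie algebra and to compare it with $\mathfrak{sl}_2(\mathbb{R})$.
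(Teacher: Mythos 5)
Your proposal is correct, and it reaches the conclusion by a more self-contained route than the paper. The paper's own proof stops after specialising \eqref{X} to $n=2$ and listing the three generators (the rotation $(x^2-y^2)\partial_x+2xy\,\partial_y$, the dilation $x\partial_x+y\partial_y$ and the translation $\partial_x$); it then simply invokes the result of Fasihi-Ramandi et al.\ \cite{fasihi2025hyperbolic} to identify this span with $\mathfrak{sl}_2(\mathbb{R})$, without computing any brackets. You instead verify closure directly, and your relations $[X_3,X_2]=X_3$, $[X_3,X_1]=X_2$, $[X_2,X_1]=X_1$ check out; the rescaling $h=2X_2$, $e=X_1$, $f=-2X_3$ does satisfy $[h,e]=2e$, $[h,f]=-2f$, $[e,f]=h$, so the explicit isomorphism is valid. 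Your alternative argument (each element of $\Gamma_2$ is Killing, hence $\Gamma_2\subset\mathfrak{so}(2,1)$, and $\dim\Gamma_2=3=\dim\mathfrak{so}(2,1)$, followed by the classical isomorphism $\mathfrak{so}(2,1)\cong\mathfrak{sl}_2(\mathbb{R})$) is also sound and is closer in spirit to the dimension-count the paper uses in higher dimensions. What your version buys is independence from the external citation together with an explicit isomorphism; what the paper's version buys is brevity. Your closing remark on the word ``non-constant'' is well taken: as literally defined, $\Gamma_2$ is not a vector space (it omits the constant fields $c\,\partial_x$), and both you and the paper implicitly work with the linear span of all fields of the form \eqref{X}; making that convention explicit is the only repair either argument needs.
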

\begin{proof}
    Since $\left(\mathbb{H}^2,ds^2,X,\lambda,\alpha\right)$ is Ricci-Bourguignon soliton, then \(X\) is a Killing vector fields of the form
    $$X = \left(\frac{a}{2}(x^2-y^2)+bx+c\right)\partial  x+ (ax+b)y\frac{\partial}{\partial y}$$
    where $a, b, c$ are constants.
    We have  \[X\in \langle (x^2-y^2)\partial x+2xy\partial y, -2(x\partial x+y\partial y),-\partial x\rangle\]. Moreover, the generators are all killing fields, so the set of vector fields makes \(\mathbb{H}^2\)  Ricci-Bourguignon soliton is isomorphic to  \(\langle (x^2-y^2)\partial x+2xy\partial y, -2(x\partial x+y\partial y),-\partial x\rangle\). By the paper \cite{fasihi2025hyperbolic},  the set of all vector fields making \(\mathbb{H}^2\)  Ricci-Bourguignon soliton is isomorphic to \(sl_2(\mathbb{R})\).
\end{proof}
\begin{remark}
    If $\left(\mathbb{H}^2,ds^2,X,\lambda,\alpha\right)$ is a  Ricci-Bourguignon soliton, then  \(X\) it generates an isometric flow. Moreover, it is the linear combination of the translation following \(x\) (\(T=\partial x\)), of expansion \((D=x\partial x+y\partial y)\) and of the rotation \((G=(x^2-y^2)\partial x+2xy\partial y)\).
\end{remark} 
\begin{proposition}
    The flow of the translation is of the form $\varphi^T_t(x_0,y_0)=(t+x_0)+iy_0$, which constitutes a horizontal horocycle of the Poincar\'e semiplane.

The flow of the expansion is of the form $\varphi^D_t(x_0,y_0)=e^t(x_0+iy_0)$, whose trajectories are hypercycles with one of their extremities at infinity.

Finally, the flow of the rotation is in the form $\varphi_t^G(x_0,y_0)=-\frac{t+e}{(t+e)^2+f^2}+i\frac{f}{(t+e)^2+f^2}$, where $e$ and $f$ are real constants.
\end{proposition}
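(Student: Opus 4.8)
The plan is to integrate, one generator at a time, the system of ordinary differential equations defining the flow $\varphi_t$ of each vector field; that is, for a vector field $V = P(x,y)\,\partial x + Q(x,y)\,\partial y$ we solve $\dot x = P(x,y)$, $\dot y = Q(x,y)$ with initial data $(x_0,y_0)$, and then identify the resulting curve geometrically in the upper half-plane model.

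First I would treat the translation $T = \partial x$: here $\dot x = 1$, $\dot y = 0$, so immediately $x(t) = x_0 + t$ and $y(t) = y_0$, giving $\varphi^T_t(x_0,y_0) = (x_0+t) + i y_0$. Since $y$ is constant, the orbit is a horizontal line $\{\,\mathrm{Im} = y_0\,\}$, which is precisely a horocycle based at the ideal point $\infty$. Next, for the dilation $D = x\,\partial x + y\,\partial y$, the system $\dot x = x$, $\dot y = y$ decouples into two copies of the exponential equation, yielding $x(t) = e^t x_0$, $y(t) = e^t y_0$, i.e. $\varphi^D_t(x_0,y_0) = e^t(x_0 + i y_0)$; the orbit is the Euclidean ray from the origin through $(x_0,y_0)$, which is a hypercycle (equidistant curve) sharing the endpoint $0$ (and $\infty$) with the vertical geodesic, unless $x_0 = 0$ in which case it is that geodesic itself.

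The substantive computation is the rotation $G = (x^2 - y^2)\,\partial x + 2xy\,\partial y$. The cleanest route is to pass to the complex coordinate $z = x + iy$ and observe that $G$ corresponds to the holomorphic vector field $-\bar z^2\,\partial_z$ up to conjugation — more concretely, one checks that $\dot z = \dot x + i \dot y = (x^2 - y^2) + 2ixy$ is not quite $z^2$ but rather the expression matching the Möbius one-parameter subgroup fixing $0$. The efficient trick is the substitution $w = -1/z$ (equivalently $w = 1/\bar z$ after adjusting signs), under which the nonlinear flow linearizes to $\dot w = \text{const}$, so $w(t) = w_0 + t$, and inverting gives
\begin{equation*}
\varphi_t^G(x_0,y_0) = \frac{-1}{t + w_0} = -\frac{t+e}{(t+e)^2 + f^2} + i\,\frac{f}{(t+e)^2 + f^2},
\end{equation*}
where $w_0 = e + if$ is determined by the initial condition $z_0 = -1/w_0$, i.e. $e = -x_0/(x_0^2+y_0^2)$ and $f = y_0/(x_0^2+y_0^2)$. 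One must verify that $f > 0$ whenever $y_0 > 0$, so the flow preserves $\mathbb{H}^2$, and that as $t$ ranges over $\mathbb{R}$ the point stays on a Euclidean circle tangent to the real axis at $0$, a horocycle based at $0$.

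The main obstacle is the rotation case: identifying the correct linearizing substitution and keeping track of signs so that the final parametrization matches the stated form exactly, while also confirming that $\mathbb{H}^2$ is invariant under each flow (which follows because all three generators are Killing fields, hence generate isometries, as already noted in the preceding remark). The translation and dilation cases are immediate; the only care needed there is the geometric identification of the orbits as horocycles and hypercycles, which is standard in the half-plane model.
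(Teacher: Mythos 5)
Your proposal is correct and follows essentially the same route as the paper: integrate the three ODE systems directly, treating the rotation by passing to the complex coordinate $z=x+iy$, and your linearizing substitution $w=-1/z$ with $\dot w=1$ is just the explicit solution of the same Riccati-type equation the paper solves, with the bonus of identifying the orbits geometrically and pinning down $e,f$ in terms of $(x_0,y_0)$. One slip to fix: your aside that $\dot z=(x^2-y^2)+2ixy$ is ``not quite $z^2$'' is false --- it equals $z^2$ exactly, since $(x+iy)^2=x^2-y^2+2ixy$, and it is precisely this identity (used directly in the paper) that makes $\dot w=\dot z/z^2=1$ and hence justifies your linearization; as written, the remark contradicts the step it is meant to support.
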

\begin{proof}
Consider the flow family \(\varphi^k_t(x_0,y_0)=x(t)+iy(t)\)  where \(\varphi^1_t(x_0,y_0)=\varphi^T_t(x_0,y_0)\), \(\varphi^2_t(x_0,y_0)=\varphi^D_t(x_0,y_0)\) and \(\varphi^3_t(x_0,y_0)=\varphi^G_t(x_0,y_0)\).
    \begin{enumerate}
        \item For the translation: \[\begin{cases}
            \frac{d x(t)}{dt}=1\\
            \frac{dy(t)}{dt}=0
        \end{cases}\Rightarrow \varphi^T_t(x_0,y_0)=(t+x_0)+iy_0.  \]
        \item  For the expansion:   \[\begin{cases}
            \frac{d x(t)}{dt}=x(t)\\
            \frac{dy(t)}{dt}=y(t)
        \end{cases}.   \]  \(\varphi^D_t(x_0,y_0)=(x_0+iy_0)e^t.\)
        These are first order linear differential equations.
        \item For the rotation: \[\begin{cases}
            \frac{d x(t)}{dt}=x^2(t)-y^2(t)\\
            \frac{dy(t)}{dt}=2x(t)y(t)
        \end{cases}\Rightarrow \frac{dx(t)}{dt}+i\frac{dy(t)}{dt}=\left(x(t)+iy(t)\right)^2.   \] Let's  put  \(z(t)=x(t)+iy(t)\) so we have \(\frac{dz(t)}{dt}=z^2(t)\). What it implies \[z(t)=-\frac{1}{(t+e)+if}\Rightarrow \varphi_t^G(x_0,y_0)=-\frac{t+e}{(t+e)^2+f^2}+i\frac{f}{(t+e)^2+f^2}.\] 
    \end{enumerate}
\end{proof}
\begin{proposition}
    If $X\in \Gamma_2$ then its dual form is not closed. Moreover $X$ preserves its dual form.
\end{proposition}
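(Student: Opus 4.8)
The plan has two essentially independent parts, and only the first requires any computation. For the non-closedness, I would work in the upper half-plane model $\mathbb{H}^2=\{(x,y):y>0\}$ with $ds^2=(dx^2+dy^2)/y^2$, where by \eqref{Xmol} (with $n=2$) the dual form is $\omega=P\,dx+Q\,dy$ with $P=\tfrac1{y^2}\bigl(\tfrac a2(x^2-y^2)+bx+c\bigr)$ and $Q=\tfrac1y(ax+b)$. Since the manifold is two-dimensional, $d\omega=(\partial_xQ-\partial_yP)\,dx\wedge dy$; a one-line computation gives $\partial_xQ=a/y$ and $\partial_yP=-(ax^2+2bx+2c)/y^3$, hence
\[
 d\omega=\frac{a(x^2+y^2)+2bx+2c}{y^3}\,dx\wedge dy .
\]
The numerator is a polynomial in $(x,y)$ that vanishes identically on $\{y>0\}$ only when $a=b=c=0$; but an element of $\Gamma_2$ is non-constant, so $(a,b)\neq(0,0)$, and the numerator is a genuinely non-zero function. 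Therefore $d\omega\not\equiv 0$, i.e.\ $\omega$ is not closed.

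For the second assertion I would avoid coordinates and instead use that $X$ is a Killing field of $\mathbb{H}^2$, which is precisely the content of the Bousso--Ndiaye characterization recalled before \eqref{X} (and restated in the theorem above). The point is the general fact that a Killing field annihilates its own metric dual $\omega=X^\flat$. For any vector field $Y$,
\[
 (\mathcal{L}_X\omega)(Y)=X\bigl(g(X,Y)\bigr)-\omega\bigl([X,Y]\bigr)=g(\nabla_XX,Y)+g(X,\nabla_YX),
\]
using $\nabla g=0$ and $[X,Y]=\nabla_XY-\nabla_YX$. Applying the Killing equation $g(\nabla_YX,Z)+g(\nabla_ZX,Y)=0$ with $Z=X$ yields $g(\nabla_XX,Y)=-g(\nabla_YX,X)=-g(X,\nabla_YX)$, so the two terms cancel and $\mathcal{L}_X\omega=0$. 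Equivalently, one can invoke Cartan's formula $\mathcal{L}_X\omega=\iota_Xd\omega+d\,\iota_X\omega$ together with the facts that $\mathcal{L}_X$ commutes with the musical isomorphism when $X$ is Killing and that $\mathcal{L}_XX=[X,X]=0$. Either way, $(\varphi^X_t)^*\omega=\omega$, that is, $X$ preserves $\omega$.

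Neither step is a real obstacle; the only care needed is in the first part, where one must confirm that the numerator of $d\omega$ is non-vanishing for \emph{every} non-constant member of $\Gamma_2$. This is immediate on completing the square: it equals $a\bigl((x+\tfrac ba)^2+y^2\bigr)+\bigl(2c-\tfrac{b^2}a\bigr)$ when $a\neq0$ and $2bx+2c$ with $b\neq0$ when $a=0$, hence in all cases a non-constant function on $\{y>0\}$. As a consistency check of the two parts, note that $\mathcal{L}_X\omega=0$ forces, via Cartan's formula, $\iota_Xd\omega=-d\bigl(g(X,X)\bigr)$; this is compatible with $d\omega\neq0$ exactly because $g(X,X)$ is itself non-constant for non-constant $X$, which one can verify directly from \eqref{X} if desired.
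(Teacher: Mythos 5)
Your proof is correct. The first half (non-closedness) is essentially the paper's own computation: the same $P$, $Q$, the same $d\omega=\frac{a(x^2+y^2)+2bx+2c}{y^3}\,dx\wedge dy$, and the same conclusion that the numerator cannot vanish identically for a non-constant member of $\Gamma_2$ (your completing-the-square remark is a slightly cleaner way to rule out identical vanishing than the paper's brief ``$d\omega=0$ iff $a=b=c=0$''). The second half is where you genuinely diverge: the paper proves $\mathcal{L}_X\omega=0$ by a long explicit coordinate computation via Cartan's formula, computing $i_X\omega$, $d(i_X\omega)$ and $i_Xd\omega$ and checking that the $dx$- and $dy$-components cancel, whereas you use the abstract fact that a Killing field annihilates its own metric dual, $(\mathcal{L}_X\omega)(Y)=g(\nabla_XX,Y)+g(X,\nabla_YX)=0$ by the Killing equation with $Z=X$ (or, even more directly, $\mathcal{L}_X(X^\flat)=(\mathcal{L}_Xg)(X,\cdot)+g([X,X],\cdot)=0$). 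Your route is shorter, coordinate-free, and — since it only uses that elements of $\Gamma_n$ are Killing, which is the Bousso--Ndiaye result the paper itself invokes — it proves the ``preserves its dual form'' assertion uniformly in all dimensions, subsuming the analogous long computations the paper repeats for $n=3$ and general $n$. What the paper's explicit computation buys instead is the concrete formula for $i_X\omega=g(X,X)$ and $i_Xd\omega$, which it reuses later (e.g.\ in the Reeb-field lemma in dimension $3$); your consistency check $i_Xd\omega=-d\bigl(g(X,X)\bigr)$ recovers exactly that relation, so nothing is lost.
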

\begin{proof}
Using \eqref{X}, the vectors  field $X(x,y)$ is given by:
$$X(x,y) = \left(\frac{a}{2}(x^2-y^2)+bx+c\right)\partial x+y(ax+b)\partial y.$$

We are going to calculate $w$ and  $dw$.

The dual form $w$ of a vector field $X$ is given by
$$w = \left(\frac{a}{2}\left(\frac{x^2}{y^2}-1\right)+b\frac{x}{y^2}+\frac{c}{y^2}\right)dx + y^{-1}(ax+b)dy.$$
we have:
$$dw = \left(\frac{\partial Q}{\partial x} - \frac{\partial P}{\partial y}\right)dx \wedge dy.$$

Let's calculate the partial derivatives:
$$\frac{\partial P}{\partial y} = \frac{\partial}{\partial y}\left(\frac{a}{2}\left(\frac{x^2}{y^2}-1\right)+b\frac{x}{y^2}+\frac{c}{y^2}\right) = \frac{-ax^2-2bx-2c}{y^3}$$
$$\frac{\partial Q}{\partial x} = \frac{\partial}{\partial x}(y^{-1}(ax+b))= \frac{a}{y}.$$

Let's substitute these values into the formula of $dw$ :
$$d\omega = \frac{a}{y}dx \wedge dy - \left(-\frac{ax^2+2bx+2c}{y^3}\right)dx \wedge dy = \left(\frac{a}{y} + \frac{ax^2+2bx+2c}{y^3}\right)dx \wedge dy$$
$$d\omega = \left(\frac{ay^2+ax^2+2bx+2c}{y^3}\right)dx \wedge dy$$
  \(dw=0\) if \(a=b=c=0\) while \(X\) is not constant.

Let's calculate the Lie derivative $\mathcal{L}_X \omega$, we will use Cartan's formula:
$$\mathcal{L}_X \omega = i_X d\omega + d(i_X \omega)$$
where $i_X$ is the contraction by the vector field $X$.\\
Now let us compute: $i_X \omega$:
$$i_X \omega = \omega(X) = \omega\left( \left(\frac{a}{2}(x^2-y^2)+bx+c\right)\frac{\partial}{\partial x} + (ax+b)y\frac{\partial}{\partial y} \right)$$
$$= \left(\frac{a}{2}\left(\frac{x^2}{y^2}-1\right)+b\frac{x}{y^2}+\frac{c}{y^2}\right)\left(\frac{a}{2}(x^2-y^2)+bx+c\right) + y^{-1}(ax+b)(ax+b)y$$
$$= \left(\frac{a}{2}\left(\frac{x^2-y^2}{y^2}\right)+b\frac{x}{y^2}+\frac{c}{y^2}\right)\left(\frac{a}{2}(x^2-y^2)+bx+c\right) + (ax+b)^2$$
$$= \frac{1}{y^2}\left(\frac{a}{2}(x^2-y^2)+bx+c\right)^2 + (ax+b)^2$$
It is a scalar function, which we will call $f$.\\
Next we will compute  $d(i_X \omega)=df$:
$$df = \frac{\partial f}{\partial x}dx + \frac{\partial f}{\partial y}dy$$
Let's calculate the partial derivatives:
$$\frac{\partial f}{\partial x} = \frac{1}{y^2} 2\left(\frac{a}{2}(x^2-y^2)+bx+c\right)(ax+b) + 2a(ax+b)$$
$$\frac{\partial f}{\partial y} = -\frac{2}{y^3}\left(\frac{a}{2}(x^2-y^2)+bx+c\right)^2 + \frac{1}{y^2} 2\left(\frac{a}{2}(x^2-y^2)+bx+c\right)(-ay)$$
$$= -\frac{2}{y^3}\left(\frac{a}{2}(x^2-y^2)+bx+c\right)^2 - \frac{2a}{y}\left(\frac{a}{2}(x^2-y^2)+bx+c\right).$$
We have also
\begin{eqnarray}\label{Omega} d\omega = \left(\frac{ay^2+ax^2+2bx+2c}{y^3}\right)dx \wedge dy.
\end{eqnarray}
And we have
$$i_X d\omega = i_X \left(\left(\frac{ay^2+ax^2+2bx+2c}{y^3}\right)dx \wedge dy\right)$$
$$= \left(\frac{ay^2+ax^2+2bx+2c}{y^3}\right) (X^x dy - X^y dx)$$
where $X^x=\frac{a}{2}(x^2-y^2)+bx+c$ et $X^y=(ax+b)y$.
\begin{eqnarray}\label{Domega}i_X d\omega = \left(\frac{ay^2+ax^2+2bx+2c}{y^3}\right) \left(\left(\frac{a}{2}(x^2-y^2)+bx+c\right)dy - (ax+b)ydx\right).
\end{eqnarray}
Using \eqref{Omega} and \eqref{Domega}, we have
Now, we need to add $i_X d\omega$ et $d(i_X \omega)$.
\begin{eqnarray*}\mathcal{L}_X \omega &=& i_X d\omega + d(i_X \omega)\\
&=& \left[ -\frac{ay^2+ax^2+2bx+2c}{y^2} (ax+b) + \frac{2}{y^2}\left(\frac{a}{2}(x^2-y^2)+bx+c\right)(ax+b) + 2a(ax+b) \right]dx\\
&&+ \Big[ \frac{ay^2+ax^2+2bx+2c}{y^3} \left(\frac{a}{2}(x^2-y^2)+bx+c\right) -\frac{2}{y^3}\left(\frac{a}{2}(x^2-y^2)+bx+c\right)^2 \\
&&- \frac{2a}{y}\left(\frac{a}{2}(x^2-y^2)+bx+c\right) \Big]dy.
\end{eqnarray*}
Let's start by setting the following notations to simplify the expressions:
$$\xi = \frac{a}{2}(x^2 - y^2) + bx + c, \qquad \zeta = ax + b.$$
We then observe that the expression becomes:
$$ \left[ -\frac{ay^2 + ax^2 + 2bx + 2c}{y^2} \zeta + \frac{2}{y^2}\xi \zeta + 2a\zeta \right]dx + \left[ \frac{ay^2 + ax^2 + 2bx + 2c}{y^3} \xi - \frac{2}{y^3}\xi^2 - \frac{2a}{y}\xi \right]dy.$$
Let's factor out the common factor $\zeta = ax + b$ :
$$\left[ \zeta\left( -\frac{ay^2 + ax^2 + 2bx + 2c}{y^2} + \frac{2\xi}{y^2} + 2a \right) \right] dx$$
We group the terms:
$$ \zeta \left[ \frac{2\xi - (ay^2 + ax^2 + 2bx + 2c)}{y^2} + 2a \right] dx $$
But we observe that :
$$ ay^2 + ax^2 + 2bx + 2c = a(x^2 + y^2) + 2bx + 2c $$
and :
$$ 2\xi = a(x^2 - y^2) + 2bx + 2c $$
So we hae:
$$ 2\xi - (ay^2 + ax^2 + 2bx + 2c) = [a(x^2 - y^2) + 2bx + 2c] - [a(x^2 + y^2) + 2bx + 2c] = -2a y^2 $$
Then the part on $dx$ becomes :
$$ \zeta \left( \frac{-2a y^2}{y^2} + 2a \right) dx = \zeta (-2a + 2a) dx = 0. $$
We make the same computation for getting the part of $dy$.
We still use the notation:
$$ \xi = \frac{a}{2}(x^2 - y^2) + bx + c $$
The expression becomes :
$$ \left[ \frac{a(x^2 + y^2) + 2bx + 2c}{y^3}\xi - \frac{2}{y^3}\xi^2 - \frac{2a}{y}\xi \right]dy. $$
We factorize $\xi$ :
$$ \xi \left( \frac{a(x^2 + y^2) + 2bx + 2c}{y^3} - \frac{2\xi}{y^3} - \frac{2a}{y} \right)dy = \xi \left( \frac{a(x^2 + y^2) + 2bx + 2c - 2\xi}{y^3} - \frac{2a}{y} \right)dy $$
We have already:
$$ 2\xi = a(x^2 - y^2) + 2bx + 2c \implies a(x^2 + y^2) + 2bx + 2c - 2\xi = 2a y^2 $$
So we have :
$$ \xi \left( \frac{2a y^2}{y^3} - \frac{2a}{y} \right) dy = \xi \left( \frac{2a}{y} - \frac{2a}{y} \right) dy = 0 $$
so \(\mathcal{L}_X\omega=0.\)

\end{proof}
\subsection{Ricci-Bourguignon soliton in dimension $n=3$}\label{sec:n3}
\begin{theorem}
   The set $\Gamma_3$ is a Lie subalgebra of \(\mathfrak{so}(3,1)\).
\end{theorem}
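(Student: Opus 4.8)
The plan is to present $\Gamma_3$ as an explicit finite-dimensional linear subspace of the Killing algebra $\mathfrak{so}(3,1)$ of $\mathbb{H}^3$ and then to test whether it is closed under the Lie bracket by computing the structure constants directly.

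First I would set $n=3$ in \eqref{X}. A general element of $\Gamma_3$ is then the linear combination, with real parameters $a_1,a_2,b,c_1,c_2$, of the five fields
$$G_1=\tfrac12(x_1^2-x_2^2-x_3^2)\,\partial_{x_1}+x_1x_2\,\partial_{x_2}+x_1x_3\,\partial_{x_3},\qquad G_2=x_1x_2\,\partial_{x_1}+\tfrac12(x_2^2-x_1^2-x_3^2)\,\partial_{x_2}+x_2x_3\,\partial_{x_3},$$
$$D=x_1\partial_{x_1}+x_2\partial_{x_2}+x_3\partial_{x_3},\qquad T_1=\partial_{x_1},\qquad T_2=\partial_{x_2}.$$
These are $\mathbb{R}$-linearly independent (compare their quadratic, linear and constant parts), so $\Gamma_3=\langle G_1,G_2,D,T_1,T_2\rangle$ is a $5$-dimensional real vector space. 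By \cite{bousso2025ricci} every field of the form \eqref{X} is Killing for $g=x_3^{-2}\sum_i dx_i^2$ --- this can also be checked directly on the five generators --- and the Killing fields of $\mathbb{H}^3$ are exactly $\mathfrak{so}(3,1)$, so $\Gamma_3\subseteq\mathfrak{so}(3,1)$ as a linear subspace.

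It then remains to establish closure under $[\cdot,\cdot]$, which by bilinearity and antisymmetry reduces to the $\binom52=10$ brackets of the generators, computed from $[A,B]_j=\sum_i(A_i\partial_{x_i}B_j-B_i\partial_{x_i}A_j)$. Several land in the span at once: $[T_1,T_2]=0$, $[D,T_i]=-T_i$, and $[D,G_i]=G_i$ (since $D$ is the Euler field and the $G_i$ have homogeneous quadratic components). One finds also $[T_1,G_1]=[T_2,G_2]=D$ and $[G_1,G_2]=0$. So the whole matter comes down to the two remaining ``mixed'' brackets $[T_1,G_2]$ and $[T_2,G_1]$, which have to be expanded term by term and, for the theorem to hold, expressed as combinations of $G_1,G_2,D,T_1,T_2$.

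The hard part is exactly here, and I expect it to be the sticking point. The algebra $\mathfrak{so}(3,1)$ is $6$-dimensional, so it contains one Killing direction transverse to the $5$-dimensional $\Gamma_3$, namely the boundary rotation $M_{12}=x_1\partial_{x_2}-x_2\partial_{x_1}$, which by inspection is not of the form \eqref{X}; and a quick computation gives $[T_2,G_1]=-x_2\partial_{x_1}+x_1\partial_{x_2}=M_{12}$ and likewise $[T_1,G_2]=-M_{12}$. Unless these $M_{12}$-components cancel against a term I have overlooked, $\Gamma_3$ is not closed under the bracket, and the honest conclusion would be either to weaken the statement (``$\Gamma_3$ generates the Lie subalgebra $\mathfrak{so}(3,1)$'') or to enlarge $\Gamma_3$ by $M_{12}$, which recovers all of the $6$-dimensional $\mathfrak{so}(3,1)$. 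Either way, the heart of the argument --- and its main obstacle --- is the precise evaluation of the two mixed brackets $[T_1,G_2]$ and $[T_2,G_1]$.
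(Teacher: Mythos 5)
Your bracket computations are correct, and they expose a real problem rather than a gap in your own reasoning. The paper's proof takes a different and strictly weaker route: it lists the five generators $G_1,G_2,D,T_1,T_2$, counts $5<6=\dim\mathfrak{so}(3,1)$, and then asserts closure solely from the remark that the bracket of two Killing fields is again a Killing field. That remark only shows that brackets of elements of $\Gamma_3$ land in $\mathfrak{so}(3,1)$; it does not show they land back in the $5$-dimensional span, which is precisely what your mixed brackets test. The paper moreover identifies the ``missing'' sixth generator as $-\partial_{x_3}$, which is not even a Killing field of $x_3^{-2}(dx_1^2+dx_2^2+dx_3^2)$; the true complement of the span of $\Gamma_3$ inside $\mathfrak{so}(3,1)$ is the horizontal rotation $M_{12}=x_1\partial_{x_2}-x_2\partial_{x_1}$ that you exhibit.

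Since $[T_2,G_1]=M_{12}$ and $[T_1,G_2]=-M_{12}$, and $M_{12}$ is not of the form \eqref{X} (forcing the quadratic part to vanish gives $a_1=a_2=0$, and then the first component $bx_1+c_1$ can never equal $-x_2$), your obstruction is genuine: the span of the five generators is not closed under the bracket, so neither your direct verification nor the paper's Killing-field argument proves the theorem as stated. Your suggested repairs are the honest conclusions: either weaken the claim to ``$\Gamma_3$ generates the Lie algebra $\mathfrak{so}(3,1)$'' or enlarge $\Gamma_3$ by $M_{12}$, which yields all of $\mathfrak{so}(3,1)$. It is worth noting in support of the second option that $M_{12}$ is itself a Killing field of $\mathbb{H}^3$ and hence satisfies the Ricci--Bourguignon soliton equation for a suitable $\lambda$, which suggests that the classification \eqref{X} quoted from \cite{bousso2025ricci} omits the rotational Killing fields and that the discrepancy originates there rather than in your calculation.
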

\begin{proof}
    For $n=3$, we use \eqref{X} and we get  the vectors  field $X(x,y)$:
\begin{align}\label{X3}
X(x_1, x_2, x_3) =& \left( \frac{a_1}{2} (x_1^2 - x_2^2 - x_3^2) + a_2 x_1 x_2 + b x_1 + c_1 \right) \frac{\partial}{\partial x_1} \nonumber\\
&+ \left( \frac{a_2}{2} (x_2^2 - x_1^2 - x_3^2) + a_1 x_1 x_2 + b x_2 + c_2 \right) \frac{\partial}{\partial x_2} \nonumber\\
&+ \left( a_1 x_1 x_3 + a_2 x_2 x_3 + b x_3 \right) \frac{\partial}{\partial x_3}
\end{align}
The number of generators we can extract is $2(3)-1=5$. The dimension of the Lie algebra $\mathfrak{so}(3,1)$ of the isometries of  $\mathbb{H}^3$,  is $\frac{3(3+1)}{2}=6$. There is therefore a missing generator.

The identified generators are :
\begin{itemize}
 \item of $a_1$: $G_1 = \frac{1}{2}(x_1^2 - x_2^2 - x_3^2) \frac{\partial}{\partial x_1} +  x_1 x_2 \frac{\partial}{\partial x_2} +  x_1 x_3 \frac{\partial}{\partial x_3}$
 \item of $a_2$: $G_2 =  x_1 x_2 \frac{\partial}{\partial x_1} +\frac{1}{2} (x_2^2 - x_1^2 - x_3^2) \frac{\partial}{\partial x_2} +  x_2 x_3 \frac{\partial}{\partial x_3}$
 \item of $b$: $D =  x_1 \frac{\partial}{\partial x_1} + x_2 \frac{\partial}{\partial x_2} + x_3 \frac{\partial}{\partial x_3}$ (expansion)
 \item of $c_1$: $T_1 = \frac{\partial}{\partial x_1}$ (Translation on $x_1$)
 \item of $c_2$: $T_2 = \frac{\partial}{\partial x_2}$ (Translation on $x_2$)
\end{itemize}
The missing generator to form the complete $\mathfrak{so}(3,1)$ algebra is the pure translation along the axis $x_3$, i.e., $-\frac{\partial}{\partial x_3}$. 
For $n=3$, the vector field $X$ generates a Lie subalgebra of $\mathfrak{so}(3,1)$ of dimension 5 because the bracket of two Killing fields is a Killing field.

\end{proof}
\begin{proposition}\label{P1}
    The vector fields generated by the set of vector fields making the hyperbolic space $\mathbb{H}^3$ Ricci-Bourguignon soliton have as flows: \[
\begin{aligned}
\varphi_{t}^{D}(x_1^0,x_2^0,x_3^0)&=e^{t}(x_1^0,x_2^0,x_3^0),\\
\varphi_{t}^{T_{1}}(x_1^0,x_2^0,x_3^0)&=(x_{1}^0+t,\;x_{2}^0,\;x_{3}^0),\\
\varphi_{t}^{T_{2}}(x_1^0,x_2^0,x_3^0)&=(x_{1}^0,\;x_{2}^0+t,\;x_{3}^0),\\
\varphi_{t}^{G_{1}}(x_1^0,x_2^0,x_3^0)&=\Bigl(\delta(t),\;r(t)\cos\theta_{0},\;r(t)\sin\theta_{0}\Bigr),\\
\varphi_{t}^{G_{2}}(x_1^0,x_2^0,x_3^0)&=\Bigl(\rho(t)\cos\phi_{0},\;\gamma(t),\;\rho(t)\sin\phi_{0}\Bigr),
\end{aligned}
\] where \[
    \delta(t)
     =-\,2\,\frac{t+t_{0}}{(t+t_{0})^{2}+c_0^{2}},
     \quad
     r(t)
     =2\,\frac{c_0}{(t+t_{0})^{2}+c_0^{2}},\quad t_{0}+ic_0=-\frac{2}{\,x_{1}^{0}+i\,r(0)\,}\quad,\theta_{0}=\arg\bigl(x_{2}^{0}+i\,x_{3}^{0}\bigr),
   \]  \[\gamma(t)=-2\,\frac{t+ s_{0}}{(t+s_{0})^{2}+ e_{0}^{2}},\quad \rho(t)=\frac{2e_{0}}{(t+s_{0})^{2}+ e_{0}^{2}},\quad s_{0}+ie_0=-\frac{2}{\,x_{2}^{0}+i\,\rho(0)\,},\quad
     \phi_{0}=\arg\bigl(x_{1}^{0}+i\,x_{3}^{0}\bigr).\]
\end{proposition}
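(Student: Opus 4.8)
The plan is to integrate, for each of the five generators $D,T_1,T_2,G_1,G_2$ exhibited in the proof of the previous theorem, the autonomous system $\dot\varphi=G(\varphi)$ with initial condition $\varphi(0)=(x_1^0,x_2^0,x_3^0)$, and to check that the solutions coincide with the formulas in the statement. The three ``linear'' generators are immediate: for $D=x_1\partial_{x_1}+x_2\partial_{x_2}+x_3\partial_{x_3}$ each coordinate satisfies $\dot x_i=x_i$, so $x_i(t)=x_i^0e^t$ and $\varphi_t^D=e^t(x_1^0,x_2^0,x_3^0)$; for $T_1=\partial_{x_1}$ (resp.\ $T_2=\partial_{x_2}$) only the first (resp.\ second) coordinate moves, with unit speed, which gives the stated translations. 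So the real content is the flows of $G_1$ and $G_2$.

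For $G_1$ the system is $\dot x_1=\tfrac12(x_1^2-x_2^2-x_3^2)$, $\dot x_2=x_1x_2$, $\dot x_3=x_1x_3$. The key reduction is to exploit the rotational symmetry in the $(x_2,x_3)$-plane: from the last two equations $\frac{d}{dt}(x_3/x_2)=0$, so $\theta_0:=\arg(x_2^0+ix_3^0)$ is a constant of motion, and it remains only to follow the radial variable $r:=\sqrt{x_2^2+x_3^2}$. A short computation gives $\dot r=x_1r$, hence, setting $z:=x_1+ir$, one finds $\dot z=\dot x_1+i\dot r=\tfrac12(x_1^2-r^2)+ix_1r=\tfrac12 z^2$, the same complex Riccati equation as in the $n=2$ rotation case up to the harmless factor $\tfrac12$. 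Integrating yields $z(t)=-2/\big((t+t_0)+ic_0\big)$ with $t_0+ic_0=-2/z(0)=-2/\big(x_1^0+i\,r(0)\big)$; separating real and imaginary parts produces exactly $x_1(t)=\delta(t)$ and the stated $r(t)$, and then $x_2(t)=r(t)\cos\theta_0$, $x_3(t)=r(t)\sin\theta_0$. The flow of $G_2$ follows from the identical argument after interchanging the roles of $x_1$ and $x_2$: now $\phi_0:=\arg(x_1^0+ix_3^0)$ is conserved, the reduced variable is $\rho:=\sqrt{x_1^2+x_3^2}$ with $\dot\rho=x_2\rho$, and one obtains $\gamma(t),\rho(t),\phi_0$ as written.

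There is no genuine obstacle here; the only point needing a little care is the passage from the three-dimensional system to the planar one, i.e.\ verifying that $\theta_0$ (resp.\ $\phi_0$) is indeed invariant and that $r$ (resp.\ $\rho$) satisfies an equation autonomous once coupled to $x_1$ (resp.\ $x_2$). To be thorough I would also note that from $\dot x_3=x_1x_3$ one gets $x_3(t)=x_3^0\exp\!\big(\int_0^t x_1(s)\,ds\big)>0$, so each of these one-parameter groups maps the upper half-space model of $\mathbb{H}^3$ into itself, consistently with the fact that the generators are Killing fields; and since a generic element of $\Gamma_3$ is a linear combination of $G_1,G_2,D,T_1,T_2$, all flows in the family are determined by these five together with the bracket relations, even though the general flow need not admit so compact a closed form.
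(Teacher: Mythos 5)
Your proof is correct and follows essentially the same route as the paper's: direct integration of the linear generators $D,T_1,T_2$, and for $G_1,G_2$ reduction to the radial variable $r=\sqrt{x_2^2+x_3^2}$ (resp.\ $\rho$) with constant angle, leading to the complex Riccati equation $\dot z=\tfrac12 z^2$ solved as in the two-dimensional case. Your explicit verification that $\theta_0$ is conserved via $\frac{d}{dt}(x_3/x_2)=0$ is a slightly more careful justification of the step the paper states as ``$x_2$ and $x_3$ are distributed around the circle of radius $r(t)$ according to the initial angle,'' but the argument is the same.
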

\begin{proof}
The expansion vector field is
   $$D=x_{1}\,\partial_{x_{1}}+x_{2}\,\partial_{x_{2}}+x_{3}\,\partial_{x_{3}}.$$
   The ordinary differential equation is
   $$\dot x_{k}(t)=x_{k}(t),\quad k=1,2,3,$$ 
   from where immediately we get
   $$\varphi_{t}^{D}(x^0_{1},x^0_{2},x^0_{3})=\bigl(e^{t}x^0_{1},\,e^{t}x^0_{2},\,e^{t}x^0_{3}\bigr).$$
The translations along the axes $x_{1}$ and $x_{2}$  is given by 
   $$T_{1}=\partial_{x_{1}},\quad T_{2}=\partial_{x_{2}}.$$
   The ordinary differential equation are  respectively
   $$\dot x_{1}(t)=1,\;\dot x_{2}(t)=0,\;\dot x_{3}(t)=0
     \quad\Longrightarrow\quad 
     \varphi_{t}^{T_{1}}(x^0_{1},x^0_{2},x^0_{3})
     =(x_{1}^0+t,\,x_{2}^0,\,x^0_{3}),$$
   $$\dot x_{1}(t)=0,\;\dot x_{2}(t)=1,\;\dot x_{3}(t)=0
     \quad\Longrightarrow\quad 
     \varphi_{t}^{T_{2}}(x^0_{1},x^0_{2},x^0_{3})
     =(x_{1}^0,\,x_{2}^0+t,\,x^0_{3}).$$
The first "boost/rotation" is given by
   $$G_{1}
     =\tfrac12\bigl(x_{1}^{2}-x_{2}^{2}-x_{3}^{2}\bigr)\partial_{x_{1}}
     +x_{1}x_{2}\,\partial_{x_{2}}
     +x_{1}x_{3}\,\partial_{x_{3}}.$$
  The ordinary differential equation is at the form
   \[
     \begin{cases}
       \dot x_{1}(t)=\tfrac12\,(x_{1}(t)^{2}-r^{2}(t)),\\
       \dot x_{2}(t)=x_{1}(t)x_{2}(t),\\
       \dot x_{3}(t)=x_{1}(t)x_{3}(t),
     \end{cases}
     \quad r(t)=\sqrt{x_{2}^{2}(t)+x_{3}^{2}(t)}.
   \]
   Let's put $u(t)=r^{2}(t)=x_{2}^{2}(t)+x_{3}^{2}(t)$ and \(x_1(t)=\delta(t)\). Then we have
   $$\dot u(t)=2(x_{2}(t)\dot x_{2}(t)+x_{3}(t)\dot x_{3}(t))
           =2\,x_{1}(t)u(t)
           \;\Longrightarrow\;
           u(t)=u(0)\,e^{2\int_{0}^{t}x_{1}(s)\,ds}.$$
   Furthermore, $x_{1}(t)$ satisfies the Riccati equation 
   $$\dot x_{1}(t)=\tfrac12\bigl(x_{1}^{2}(t)-u(t)\bigr).$$
  By writting $z(t)=\delta(t)+i\sqrt{u(t)}$ and by using the same method as in dimension 2, it is shown that the explicit solution is
   \[
     z(t)
     =-\,\frac{2}{\,t+t_{0}+ic_0\,},
     \quad t_{0}+ic_0\in\mathbb{H}^2,
   \]
  and therefore, by separating real and imaginary parts,
   \[
     x_{1}(t)=\delta(t)
     =-\,2\,\frac{t+t_{0}}{(t+t_{0})^{2}+c_0^{2}},
     \quad
     r(t)
     =\sqrt{u(t)}
     =2\,\frac{c_0}{(t+t_{0})^{2}+c_0^{2}}.
   \]
  Finally $x_{2}(t)$ and $x_{3}(t)$ are distributed around the circle of radius $r(t)$ according to the initial angle. In summary, for an initial point  $(x_{1}^{0},x_{2}^{0},x_{3}^{0})$ we put 
   $$r_{0}=\sqrt{(x_{2}^{0})^{2}+(x_{3}^{0})^{2}},\quad
     t_{0}+ic_0=-\,\frac{2}{\,x_{1}^{0}+i\,r_{0}\,},$$
  then
   \[
     x_{1}(t)
     =-\,2\,\frac{t+t_{0}}{(t+t_{0})^{2}+c_0^{2}},
     \quad
     r(t) =\frac{2c_0}{(t+t_{0})^{2}+c_0^{2}}.
   \]
   and finally if we write the initial argument
   $$\theta_{0}=\arg\bigl(x_{2}^{0}+i\,x_{3}^{0}\bigr),$$
   the flow of $G_{1}$ is
   \[
     \varphi_{t}^{G_{1}}(x^{0}_1,x^{0}_2, x^{0}_3)
     =\Bigl(\delta(t),\;r(t)\cos\theta_{0},\;r(t)\sin\theta_{0}\Bigr).
   \]
Th second "boost/rotation"  is
   $$G_{2}
     =x_{1}x_{2}\,\partial_{x_{1}}
     +\tfrac12\bigl(x_{2}^{2}-x_{1}^{2}-x_{3}^{2}\bigr)\partial_{x_{2}}
     +x_{2}x_{3}\,\partial_{x_{3}}.$$
   By symmetry with $G_{1}$ (we exchange $x_{1}\leftrightarrow x_{2}$), we obtain exactly the same type of flow, except for the permutation:
   \[
     \varphi_{t}^{G_{2}}(x_{1},x_{2},x_{3})
     =\bigl(\rho(t)\cos\phi_{0},\,\gamma(t),\,\rho(t)\sin\phi_{0}\bigr),
   \]
  where 
   $$\rho_{0}=\sqrt{(x_{1}^{0})^{2}+(x_{3}^{0})^{2}},\quad
     x_{2}(t)=\gamma(t)=-2\,\frac{t+ s_{0}}{(t+s_{0})^{2}+ e_{0}^{2}},\quad \rho(t)=\sqrt{u(t)}=\frac{2e_{0}}{(t+s_{0})^{2}+ e_{0}^{2}}$$
   $$s_{0}+ie_0=-\frac{2}{\,x_{2}^{0}+i\,\rho_{0}\,},\quad
     \phi_{0}=\arg\bigl(x_{1}^{0}+i\,x_{3}^{0}\bigr).$$

Summary of flows
\[
\begin{aligned}
\varphi_{t}^{D}(x_1^0,x_2^0,x_3^0)&=e^{t}(x_1^0,x_2^0,x_3^0),\\
\varphi_{t}^{T_{1}}(x_1^0,x_2^0,x_3^0)&=(x_{1}^0+t,\;x_{2}^0,\;x_{3}^0),\\
\varphi_{t}^{T_{2}}(x_1^0,x_2^0,x_3^0)&=(x_{1}^0,\;x_{2}^0+t,\;x_{3}^0),\\
\varphi_{t}^{G_{1}}(x_1^0,x_2^0,x_3^0)&=\Bigl(\delta(t),\;r(t)\cos\theta_{0},\;r(t)\sin\theta_{0}\Bigr),\\
\varphi_{t}^{G_{2}}(x_1^0,x_2^0,x_3^0)&=\Bigl(\rho(t)\cos\phi_{0},\;\gamma(t),\;\rho(t)\sin\phi_{0}\Bigr),
\end{aligned}
\]
with the explicit formulas for $\delta(t), r(t), \gamma(t)$ and $\rho(t)$ given above.
\end{proof}
\begin{theorem}\label{T2}
   Consider a vector field $X\in\Gamma_3$ given by the equation \eqref{X3} and \(w\) its dual form. Then  \((\mathbb{H}^3,w)\) is a contact manifold iff \(a_1c_2\ne a_2c_1\).
\end{theorem}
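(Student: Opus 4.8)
The plan is to reduce the contact condition to a single explicit computation. Recall that a $1$-form $\omega$ on a $3$-manifold is a contact form exactly when $\omega\wedge d\omega$ is nowhere zero, i.e.\ when it is a volume form. Working in the half-space model $\mathbb{H}^3=\{(x_1,x_2,x_3):x_3>0\}$, I would write, using \eqref{Xmol} for $n=3$,
\[
\omega = P\,dx_1 + Q\,dx_2 + R\,dx_3,\qquad P=\frac{F_1}{x_3^2},\quad Q=\frac{F_2}{x_3^2},\quad R=\frac{a_1x_1+a_2x_2+b}{x_3},
\]
where $F_1=\frac{a_1}{2}(x_1^2-x_2^2-x_3^2)+a_2x_1x_2+bx_1+c_1$ and $F_2=\frac{a_2}{2}(x_2^2-x_1^2-x_3^2)+a_1x_1x_2+bx_2+c_2$ are the first two components of $X$ in \eqref{X3}.

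First I would compute $d\omega=(Q_{x_1}-P_{x_2})\,dx_1\wedge dx_2+(R_{x_1}-P_{x_3})\,dx_1\wedge dx_3+(R_{x_2}-Q_{x_3})\,dx_2\wedge dx_3$, so that $\omega\wedge d\omega=\Phi\,dx_1\wedge dx_2\wedge dx_3$ with
\[
\Phi=P(R_{x_2}-Q_{x_3})-Q(R_{x_1}-P_{x_3})+R(Q_{x_1}-P_{x_2}).
\]
The partial derivatives involved are short: $R_{x_1}=a_1/x_3$, $R_{x_2}=a_2/x_3$, $P_{x_3}=-a_1/x_3-2F_1/x_3^3$, $Q_{x_3}=-a_2/x_3-2F_2/x_3^3$, and $Q_{x_1}-P_{x_2}=2(a_1x_2-a_2x_1)/x_3^2$. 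Substituting these, the two terms equal to $2F_1F_2/x_3^5$ cancel, leaving
\[
\Phi=\frac{2}{x_3^3}\Bigl[a_2F_1-a_1F_2+(a_1x_1+a_2x_2+b)(a_1x_2-a_2x_1)\Bigr].
\]

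The decisive step is to simplify the bracket. Expanding $a_2F_1-a_1F_2$ with \eqref{X3} gives $a_1a_2(x_1^2-x_2^2)+(a_2^2-a_1^2)x_1x_2+b(a_2x_1-a_1x_2)+(a_2c_1-a_1c_2)$, while expanding $(a_1x_1+a_2x_2+b)(a_1x_2-a_2x_1)$ gives $-a_1a_2(x_1^2-x_2^2)+(a_1^2-a_2^2)x_1x_2+b(a_1x_2-a_2x_1)$. Adding these, every monomial in $x_1,x_2$ cancels and only the constant $a_2c_1-a_1c_2$ survives, so
\[
\omega\wedge d\omega=\frac{2\,(a_2c_1-a_1c_2)}{x_3^{3}}\,dx_1\wedge dx_2\wedge dx_3.
\]
Since $x_3>0$ everywhere on $\mathbb{H}^3$, this $3$-form vanishes at some (equivalently, at every) point if and only if $a_2c_1-a_1c_2=0$; hence $\omega$ is a contact form precisely when $a_1c_2\neq a_2c_1$, which is the assertion.

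I expect the only genuine difficulty to be bookkeeping: keeping the quadratic terms organized so that the two cancellation phases — first of the $F_1F_2$ cross terms in $\Phi$, then of all $x_1,x_2$-dependent monomials inside the bracket — are transparent. Factoring out $2/x_3^3$ early and carrying $F_1,F_2$ as abbreviations keeps this manageable. As a sanity check, a pure translation $X=c_1\partial_{x_1}+c_2\partial_{x_2}$ (so $a_1=a_2=b=0$) gives $\omega\wedge d\omega=0$, consistent with $a_1c_2-a_2c_1=0$.
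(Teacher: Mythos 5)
Your proposal is correct and follows essentially the same route as the paper: both compute $d\omega$ in the half-space model and then evaluate $\omega\wedge d\omega$, arriving at the same coefficient $\dfrac{2(a_2c_1-a_1c_2)}{x_3^{3}}$, which is nowhere zero iff $a_1c_2\neq a_2c_1$. You merely spell out the two cancellation steps that the paper compresses into ``an easy computation,'' so no further changes are needed.
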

\begin{proof}

Given the 1-differential form $w = w_1 \, dx_1 + w_2\, dx_2 + w_3 \, dx_3$, with the components:
\begin{align*}
w_1 &= \frac{1}{x_3^2}\left(\frac{a_1}{2} (x_1^2 - x_2^2 ) + a_2 x_1 x_2 + b x_1 + c_1\right)-\frac{a_1}{2} \\
w_2 &=\frac{1}{x_3^2}\left( \frac{a_2}{2} (x_2^2 - x_1^2 ) + a_1 x_1 x_2 + b x_2 + c_2 \right) -\frac{a_2}{2}\\
w_3 &= a_1 x_1 x_3^{-1} + a_2 x_2 x_3^{-1} + b x_3^{-1} = \frac{1}{x_3}(a_1 x_1 + a_2 x_2 + b)
\end{align*}
The exterior differential $dw$ of a 1-form $w = w_1 \, dx_1 + w_2\, dx_2 + w_3 \, dx_3$ is given by:
$$dw = \left(\frac{\partial w_2}{\partial x_1} - \frac{\partial w_1}{\partial x_2}\right) dx_1 \wedge dx_2 + \left(\frac{\partial w_3}{\partial x_1} - \frac{\partial w_1}{\partial x_3}\right) dx_1 \wedge dx_3 + \left(\frac{\partial w_3}{\partial x_2} - \frac{\partial w_2}{\partial x_3}\right) dx_2 \wedge dx_3$$
By an easy computation, one can get the following
\begin{align}\label{w}
dw &= \frac{2}{x_3^2}(a_1 x_2 - a_2 x_1) \, dx_1 \wedge dx_2 + \frac{a_1 x_3^2 + a_1 (x_1^2 - x_2^2) + 2a_2 x_1 x_2 + 2b x_1 + 2c_1}{x_3^3} \, dx_1 \wedge dx_3\nonumber \\
&+ \frac{a_2 x_3^2 + a_2 (x_2^2 - x_1^2) + 2a_1 x_1 x_2 + 2b x_2 + 2c_2}{x_3^3} \, dx_2 \wedge dx_3.
\end{align}

And then we have 
$$w \wedge dw = \frac{2(c_1 a_2 - c_2 a_1)}{x_3^3} \, dx_1 \wedge dx_2 \wedge dx_3$$
So $w \wedge dw\neq 0$ if and only if $\frac{2(c_1 a_2 - c_2 a_1)}{x_3^3}\neq 0$.

\end{proof}
\begin{proposition}\label{PP}
   If $X\in\Gamma_3$, then its dual form is not closed. Moreover $X$ preserves its dual form.
\end{proposition}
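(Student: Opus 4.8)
The plan is to mirror, in dimension three, the two-step argument used for the $n=2$ proposition, and to reuse the computation of $d\omega$ already carried out in the proof of Theorem \ref{T2}. For the first assertion, I would argue by contradiction from the explicit expression \eqref{w}. If $d\omega=0$, then the coefficient of $dx_1\wedge dx_2$, namely $\tfrac{2}{x_3^2}(a_1x_2-a_2x_1)$, must vanish identically on $\mathbb{H}^3$; since $x_1,x_2$ are independent coordinates this forces $a_1=a_2=0$. Substituting back, the coefficients of $dx_1\wedge dx_3$ and $dx_2\wedge dx_3$ collapse to $\tfrac{2bx_1+2c_1}{x_3^3}$ and $\tfrac{2bx_2+2c_2}{x_3^3}$, whose vanishing forces $b=c_1=c_2=0$. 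By \eqref{X3} this means $X\equiv 0$, contradicting the hypothesis $X\in\Gamma_3$ (a \emph{non-constant} field). Hence $\omega$ is not closed.

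For the second assertion I would first give the conceptual argument. Every element of $\Gamma_3$ is a Killing field, so the flow $\varphi_t$ of $X$ consists of isometries of $(\mathbb{H}^3,g)$. Isometries commute with the musical isomorphism $\flat\colon T\mathbb{H}^3\to T^*\mathbb{H}^3$, and by \eqref{bmol} we have $\omega=X^{\flat}$; moreover the flow of a vector field fixes that vector field, $\varphi_t^{*}X=X$. Therefore $\varphi_t^{*}\omega=\varphi_t^{*}(X^{\flat})=(\varphi_t^{*}X)^{\flat}=X^{\flat}=\omega$ for all $t$, and differentiating at $t=0$ gives $\mathcal{L}_X\omega=0$, i.e. $X$ preserves its dual form.

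If one prefers the computational style used for $n=2$, the same conclusion follows from Cartan's formula $\mathcal{L}_X\omega=i_X\,d\omega+d(i_X\omega)$: here $i_X\omega=\omega(X)=g(X,X)$ is the (scalar) squared hyperbolic norm of $X$, written out explicitly from \eqref{X3}, and $i_X\,d\omega$ is obtained by contracting the three-term expression \eqref{w} with $X=X^{x_1}\partial_{x_1}+X^{x_2}\partial_{x_2}+X^{x_3}\partial_{x_3}$; collecting the coefficients of $dx_1,dx_2,dx_3$ and simplifying, each one vanishes. The main obstacle is purely organizational: this simplification involves on the order of a dozen terms per coordinate, and the cancellations only become visible after substituting the identities $2\bigl(\tfrac{a_k}{2}(x_k^2-\sum_{j\neq k}x_j^2)+\cdots\bigr)=a_k\bigl(\sum_j x_j^2\bigr)+\cdots$ so that the $x_3^2$-contributions cancel in pairs — exactly as in the $n=2$ computation, with no new conceptual input. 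I would therefore present the conceptual proof as the main argument and, if desired, record the explicit verification only as a check.
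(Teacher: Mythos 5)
Your proof is correct, and while your treatment of non-closedness matches the paper (you argue from the explicit coefficients of \eqref{w} that $d\omega=0$ would force $a_1=a_2=0$, then $b=c_1=c_2=0$, hence $X\equiv 0$, exactly the paper's ``$d\omega=0$ iff $X=0$'' contradiction, only spelled out more carefully), your main argument for $\mathcal{L}_X\omega=0$ is genuinely different from the paper's. The paper proceeds by brute force: it writes out $i_X\omega$, $d(i_X\omega)$ and $i_X d\omega$ in coordinates and asserts, ``with a long calculus,'' that Cartan's formula gives zero — the route you only sketch as a secondary check. You instead use the structural fact, stated in Section \ref{sec:rappel} from \cite{bousso2025ricci}, that every $X\in\Gamma_3$ is Killing: since $\omega=X^\flat$, one has $\mathcal{L}_X\omega=\mathcal{L}_X\bigl(g(X,\cdot)\bigr)=(\mathcal{L}_X g)(X,\cdot)+g(\mathcal{L}_X X,\cdot)=0$ because $\mathcal{L}_X g=0$ and $\mathcal{L}_X X=[X,X]=0$; equivalently, the (local) flow of $X$ consists of isometries which commute with $\flat$ and fix $X$. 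This buys brevity, conceptual transparency, and dimension-independence — the same two lines prove Proposition \ref{PP1} for all $n$, where the paper repeats a substantially longer computation — at the cost of leaning on the cited Killing property rather than exhibiting the cancellations explicitly, which is what the paper's coordinate computation provides. Both are valid; your version is the cleaner proof, and stating the infinitesimal identity $(\mathcal{L}_X g)(X,\cdot)+g([X,X],\cdot)$ directly avoids even the (harmless) question of completeness of the flow.
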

\begin{proof} Let \(X\in\Gamma_3\) and let \(\omega\)  be its dual form. 

According to \eqref{X3} and \eqref{w}, we have  that $d\omega=0$ if and only $X=0$ which is impossible because $X$ is non-constant. This means that \(\omega\) is  not closed.
Now let us showing that \(X\) preserves \(\omega\), i.e \(\mathcal{L}_X\omega=0\).
Let us consider the differential 1-form  \( w = w_1\, dx_1 + w_2\, dx_2 + w_3\, dx_3 \), where :
\begin{align*}
w_1 &= \frac{1}{x_3^2} \left( \frac{a_1}{2}(x_1^2 - x_2^2) + a_2 x_1 x_2 + b x_1 + c_1 \right) - \frac{a_1}{2}, \\
w_2 &= \frac{1}{x_3^2} \left( \frac{a_2}{2}(x_2^2 - x_1^2) + a_1 x_1 x_2 + b x_2 + c_2 \right) - \frac{a_2}{2}, \\
w_3 &= \frac{1}{x_3} (a_1 x_1 + a_2 x_2 + b).
\end{align*}

And the vector field \( X = X_1 \partial_{x_1} + X_2 \partial_{x_2} + X_3 \partial_{x_3} \) given by :
\begin{align*}
X_1 &= \frac{a_1}{2}(x_1^2 - x_2^2 - x_3^2) + a_2 x_1 x_2 + b x_1 + c_1, \\
X_2 &= \frac{a_2}{2}(x_2^2 - x_1^2 - x_3^2) + a_1 x_1 x_2 + b x_2 + c_2, \\
X_3 &= x_3(a_1 x_1 + a_2 x_2 + b).
\end{align*}

The differential of  \( w \) is :
\[
dw = A \, dx_1 \wedge dx_2 + B \, dx_1 \wedge dx_3 + C \, dx_2 \wedge dx_3,
\]
where :
\begin{align*}
A &= \frac{2}{x_3^2}(a_1 x_2 - a_2 x_1), \\
B &= \frac{a_1 x_3^2 + a_1(x_1^2 - x_2^2) + 2a_2 x_1 x_2 + 2b x_1 + 2c_1}{x_3^3}, \\
C &= \frac{a_2 x_3^2 + a_2(x_2^2 - x_1^2) + 2a_1 x_1 x_2 + 2b x_2 + 2c_2}{x_3^3}.
\end{align*}
Now let us showing that 
\[
\mathcal{L}_X w = d(i_X w) + i_X (dw) = 0.
\]
The inner product is given by 
\[
i_X w = w_1 X^1 + w_2 X^2 + w_3 X^3.
\]
If we put
\begin{align*}
E_1 &= \frac{a_1}{2}(x_1^2 - x_2^2) + a_2 x_1 x_2 + b x_1 + c_1, \\
E_2 &= \frac{a_2}{2}(x_2^2 - x_1^2) + a_1 x_1 x_2 + b x_2 + c_2, 
\end{align*}
then we have 
\begin{align*}
w_1 &= \frac{E_1}{x_3^2} - \frac{a_1}{2}, \quad X^1 = E_1 - \frac{a_1}{2}x_3^2, \\
w_2 &= \frac{E_2}{x_3^2} - \frac{a_2}{2}, \quad X^2 = E_2 - \frac{a_2}{2}x_3^2, \\
w_3 &= \frac{1}{x_3}(a_1 x_1 + a_2 x_2 + b), \quad X^3 = x_3(a_1 x_1 + a_2 x_2 + b).
\end{align*}
By computation we get
\begin{align*}
i_X w &= \left( \frac{E_1}{x_3^2} - \frac{a_1}{2} \right) \left( E_1 - \frac{a_1}{2}x_3^2 \right)
+ \left( \frac{E_2}{x_3^2} - \frac{a_2}{2} \right) \left( E_2 - \frac{a_2}{2}x_3^2 \right) \\
&\quad + \frac{1}{x_3}(a_1 x_1 + a_2 x_2 + b) \cdot x_3(a_1 x_1 + a_2 x_2 + b) \\
&= \frac{E_1^2 + E_2^2}{x_3^2} - \frac{a_1}{2} E_1 - \frac{a_2}{2} E_2
+ \frac{a_1^2 + a_2^2}{4} x_3^2 + (a_1 x_1 + a_2 x_2 + b)^2.
\end{align*}
Since  \( i_X w \) is a scalar function, then we have:
\begin{eqnarray}\label{ina1}
d(i_X w) = \frac{\partial (i_X w)}{\partial x_1} dx_1 + \frac{\partial (i_X w)}{\partial x_2} dx_2 + \frac{\partial (i_X w)}{\partial x_3} dx_3.
\end{eqnarray}
In the other hand and using the formula
\begin{align*}
i_X(dx_i \wedge dx_j) = X^i dx_j - X^j dx_i,
\end{align*}
we get 
\begin{eqnarray}\label{ina2}
i_X(dw) =(-A X_2 - B X_3) dx_1 + (A X_1 - C X_3) dx_2 + (B X_1 + C X_2) dx_3.
\end{eqnarray}
Using the equations \eqref{ina1}
 and \eqref{ina2} and the formula
\[
\mathcal{L}_X w = d(i_X w) + i_X(dw)
\]
with a long calculus one can see that 
$$
\boxed{\mathcal{L}_X w = 0.}
$$
This explicit calculation shows that $X$ preserves the 1-form $w$.
\end{proof}
\begin{lemma}\label{r1}
Following the Theorem \ref{T2}, the vector field $X$ is not the Reeb field of the contact form $\omega$. 
\end{lemma}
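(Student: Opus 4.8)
The plan is to pit the standard characterization of the Reeb field against the computations already in hand. Recall that if $\omega$ is a contact form, its Reeb field $R$ is the unique vector field determined by the two simultaneous conditions $\omega(R)=1$ and $i_R\,d\omega=0$; hence it suffices to show that $X$ violates one of them. Since $\omega$ is by construction the metric dual of $X$ (see \eqref{bmol}), we have $\omega(X)=i_X\omega=g(X,X)$, and this scalar function was already written out explicitly in the proof of Proposition \ref{PP}:
\[
i_X\omega=\frac{E_1^2+E_2^2}{x_3^2}-\frac{a_1}{2}E_1-\frac{a_2}{2}E_2+\frac{a_1^2+a_2^2}{4}\,x_3^2+(a_1x_1+a_2x_2+b)^2,
\]
where $E_1,E_2$ are independent of $x_3$.

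I would then invoke Theorem \ref{T2}: the assumption that $(\mathbb{H}^3,\omega)$ is a contact manifold forces $a_1c_2\neq a_2c_1$, and in particular $(a_1,a_2)\neq(0,0)$, i.e. $a_1^2+a_2^2>0$. Viewing $i_X\omega$ as a function of $x_3$ with $x_1,x_2$ frozen, its coefficient of $x_3^2$ is $\tfrac14(a_1^2+a_2^2)>0$ while $E_1^2+E_2^2\geq 0$, so $i_X\omega$ is non-constant (indeed $i_X\omega\to+\infty$ as $x_3\to+\infty$). In particular $\omega(X)\not\equiv 1$, so $X$ cannot be the Reeb field of $\omega$. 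Equivalently, one may combine $\mathcal{L}_X\omega=0$ from Proposition \ref{PP} with Cartan's formula to get $i_X\,d\omega=-d(i_X\omega)$, and the right-hand side is nonzero because $i_X\omega$ is non-constant, so the second Reeb condition $i_X\,d\omega=0$ fails as well.

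I do not anticipate a real obstacle: the only subtle point is that the contact hypothesis of Theorem \ref{T2} is genuinely needed, since it is precisely what rules out the degenerate case $a_1=a_2=0$ in which $i_X\omega$ could otherwise be constant (though then $\omega$ would not be a contact form, so the statement would be vacuous). It is also worth double-checking that the expression for $i_X\omega$ borrowed from the proof of Proposition \ref{PP} uses the same sign and normalization conventions for $\omega$, which it does.
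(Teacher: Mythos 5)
Your proposal is correct and, in substance, matches the paper: the paper's proof also rests on the non-constancy of $i_X\omega=\frac{X_1^2+X_2^2+X_3^2}{x_3^2}$ in $x_3$, but it checks the second Reeb condition, arguing via Proposition \ref{PP} and Cartan's formula that $i_Xd\omega(\partial_{x_3})=-\partial_{x_3}(i_X\omega)\neq 0$, which is exactly your ``equivalently'' remark. Your primary route is marginally more elementary, since violating the normalization $\omega(X)\equiv 1$ requires no appeal to $\mathcal{L}_X\omega=0$ at all, and you add a detail the paper leaves unjustified: the contact hypothesis $a_1c_2\neq a_2c_1$ forces $(a_1,a_2)\neq(0,0)$, so the coefficient $\tfrac14(a_1^2+a_2^2)$ of $x_3^2$ is positive and $i_X\omega$ genuinely fails to be constant in $x_3$ (the paper simply asserts this). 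One small caveat on the formula you quote from the proof of Proposition \ref{PP}: expanding $\bigl(E_i-\tfrac{a_i}{2}x_3^2\bigr)^2/x_3^2$ gives cross-terms $-a_1E_1-a_2E_2$, not $-\tfrac{a_1}{2}E_1-\tfrac{a_2}{2}E_2$ as printed there; this typo is immaterial to your argument since only the $x_3^2$-coefficient and the nonnegative term $\frac{E_1^2+E_2^2}{x_3^2}$ are used.
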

\begin{proof}
    For the proof, it is sufficient to show that \(i_Xd\omega(\partial x_3)\ne 0\). And according to the Proposition \ref{PP}, we have  \[i_Xd\omega(\partial x_3)=-d(i_X\omega)(\partial x_3)=-\partial x_3 (i_X\omega)=-\partial x_3 \left(\frac{X_1^2+X_2^2+X_3^2}{x_3^2}\right)\]
    Since the function \((x_1,x_2,x_3)\mapsto \frac{X_1^2+X_2^2+X_3^2}{x_3^2} \) is not a constant function with respect to the variable \(x_3\), then  \(\partial_{x_3}\!\left(\frac{X_1^2 + X_2^2 + X_3^2}{x_3^2}\right)\ne 0\).
\end{proof}
\begin{theorem}
   Let $X\in \Gamma_3$ such that its dual form $w$ is a contact form. Then \[T\mathbb{H}^3=\ker(w)\oplus  \text{span}(X).\]
\end{theorem}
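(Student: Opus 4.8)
The plan is to show that the sum $\ker(w)+\mathrm{span}(X)$ is both a direct sum and all of $T_p\mathbb{H}^3$ at every point $p$, by a dimension count that reduces to a single nonvanishing condition: $w(X)\ne 0$. Indeed, $\ker(w)$ is a $2$-plane distribution (since $w$ is nowhere zero, being a contact form), and $\mathrm{span}(X)$ is at most $1$-dimensional; if I can establish that $X_p\notin\ker(w_p)$, i.e. $w(X)(p)\ne 0$, then $\mathrm{span}(X_p)$ is genuinely $1$-dimensional and meets $\ker(w_p)$ only in $0$, so the sum is direct and has dimension $2+1=3=\dim T_p\mathbb{H}^3$, giving the claimed decomposition. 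So the whole theorem collapses to the pointwise claim $w(X)>0$ (or at least $\ne 0$) everywhere on $\mathbb{H}^3$.

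First I would recall from the proof of Proposition \ref{PP} the explicit expression already computed there:
\[
i_Xw = w(X) = \frac{E_1^2 + E_2^2}{x_3^2} - \frac{a_1}{2}E_1 - \frac{a_2}{2}E_2 + \frac{a_1^2+a_2^2}{4}x_3^2 + (a_1x_1+a_2x_2+b)^2,
\]
with $E_1,E_2$ as in that proof. Alternatively, and more transparently, I would use the dual-form formula \eqref{bmol} together with the hyperbolic metric $g_{ij}=x_3^{-2}\delta_{ij}$ to write directly
\[
w(X) = \sum_{i,j}g_{ij}X_iX_j = \frac{X_1^2+X_2^2+X_3^2}{x_3^2} = \|X\|_g^2,
\]
which is exactly the quantity appearing in the proof of Lemma \ref{r1}. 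This identity is the key observation: the pairing of $X$ with its own dual form is just the squared hyperbolic norm of $X$, hence manifestly nonnegative, and it vanishes at a point precisely when $X$ vanishes there.

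It then remains to rule out $X_p=0$. Since $w$ is assumed to be a contact form, Theorem \ref{T2} gives $a_1c_2\ne a_2c_1$; in particular $(c_1,c_2)\ne(0,0)$ and $(a_1,a_2)\ne(0,0)$, so $X$ is non-constant and lies in $\Gamma_3$. If $X_p=0$ for some $p=(x_1,x_2,x_3)$ with $x_3>0$, then from \eqref{X3} the third component forces $a_1x_1+a_2x_2+b=0$, and substituting back into $X_1=X_2=0$ gives
\[
\tfrac{a_1}{2}(x_1^2-x_2^2-x_3^2) + a_2x_1x_2 + c_1 = 0,\qquad
\tfrac{a_2}{2}(x_2^2-x_1^2-x_3^2) + a_1x_1x_2 + c_2 = 0
\]
(after using $bx_k=-(a_1x_1+a_2x_2)x_k$). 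Forming the combination $a_2\cdot(\text{first}) - a_1\cdot(\text{second})$ causes all the quadratic terms to cancel, leaving $a_2c_1 - a_1c_2 = 0$, contradicting $a_1c_2\ne a_2c_1$. Hence $X$ has no zero on $\mathbb{H}^3$, so $w(X)=\|X\|_g^2>0$ everywhere, and the decomposition follows.

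The main obstacle is the final cancellation step: one must check carefully that the two scalar equations $X_1=0$, $X_2=0$ (after eliminating $b$ via $X_3=0$) combine linearly to reproduce exactly the contact obstruction $a_1c_2-a_2c_1$. This is a short but slightly delicate algebraic manipulation — the quadratic terms $x_1^2,x_2^2,x_3^2,x_1x_2$ must all drop out simultaneously under the combination $a_2(\cdot)-a_1(\cdot)$ — and it is the only place where the hypothesis of Theorem \ref{T2} is actually used. Everything else (the dimension count, the norm identity, the contact condition implying $w\ne 0$) is formal.
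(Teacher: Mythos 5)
Your proposal is correct, and it shares the paper's starting point --- the identity $w(X)=\tfrac{X_1^2+X_2^2+X_3^2}{x_3^2}$ --- but it justifies the crucial nonvanishing differently, and in fact more convincingly. The paper asserts $w(X)\neq 0$ ``since $X$ is not a constant vector field'' and then invokes Lemma \ref{r1} (that $X$ is not the Reeb field), whereas you reduce everything to the pointwise statement $w_p(X_p)\neq 0$ via the dimension count $\dim\ker(w_p)=2$, and then prove that $X$ has \emph{no zeros} on $\mathbb{H}^3$ by using the contact hypothesis: $X_3=0$ with $x_3>0$ forces $b=-(a_1x_1+a_2x_2)$, and then $X_1=X_2=0$ forces $a_2c_1-a_1c_2=0$, contradicting Theorem \ref{T2}. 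This is the right way to use the hypothesis: non-constancy alone does not rule out isolated zeros of $X$ (for instance $a_1=1$, $c_1=\tfrac12$, $a_2=b=c_2=0$ gives a non-constant field in $\Gamma_3$ vanishing at $(0,0,1)$, where of course $a_1c_2=a_2c_1$ so $w$ is not contact), so your argument fills in precisely the step the paper's proof leaves implicit, and it makes the appeal to Lemma \ref{r1} unnecessary for the splitting itself.

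One small correction to your write-up: after substituting $bx_k=-(a_1x_1+a_2x_2)x_k$, the two displayed equations should simplify further to
\begin{equation*}
c_1=\tfrac{a_1}{2}\bigl(x_1^2+x_2^2+x_3^2\bigr),\qquad c_2=\tfrac{a_2}{2}\bigl(x_1^2+x_2^2+x_3^2\bigr),
\end{equation*}
(you displayed the original equations with the $bx_k$ terms merely deleted, for which the combination $a_2(\cdot)-a_1(\cdot)$ does \emph{not} kill the quadratic terms). With the correctly substituted equations the combination immediately gives $a_2c_1-a_1c_2=0$, so your conclusion stands; only the intermediate display needs fixing.
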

\begin{proof}
    Let \(X=X_1\partial x_1+X_2\partial x_2+X_3\partial x_3\in \Gamma_3\) be a vector such that its dual form  \(w\) is a contact form. We have   $$w=\frac{X_1}{x_3^2}dx_1+\frac{X_2}{x_3^2}dx_2+\frac{X_3}{x_3^2}dx_3$$ which implies that  \(w(X)=\frac{X_1^2+X_2^2+X_3^2}{x_3^2}\ne 0\), since \(X\) is not a constant vector field. The Lemma \ref{r1} ensure that \(T\mathbb{H}^3=\ker(w)\oplus  \text{span}(X).\)
\end{proof}
\subsection{Ricci-Bourguignon soliton in dimension $n \geq 3$}\label{sec:n4}
\begin{theorem}
   The set $\Gamma_n$ is a Lie subalgebra of  \(\mathfrak{so}(n,1)\).
\end{theorem}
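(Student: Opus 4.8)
The plan is to mimic exactly the argument already used for $n=3$ in Theorem~\ref{T2}'s ambient statement and in the proof that $\Gamma_3$ is a Lie subalgebra of $\mathfrak{so}(3,1)$, but carried out for general $n$. First I would observe that by the characterization \eqref{X} every $X\in\Gamma_n$ is a Killing field of $\mathbb{H}^n$, hence lies in the Lie algebra $\mathfrak{so}(n,1)$ of infinitesimal isometries; this gives the inclusion $\Gamma_n\subseteq\mathfrak{so}(n,1)$ at the level of sets. Then I would check that $\Gamma_n$ is in fact a linear subspace: the expression \eqref{X} depends linearly on the parameters $a_1,\dots,a_{n-1},b,c_1,\dots,c_{n-1}$, so $\Gamma_n$ together with the zero field is exactly the span of the $2n-1$ generators
\[
G_k=\tfrac12\Bigl(x_k^2-\sum_{j\neq k}x_j^2\Bigr)\partial_{x_k}+\sum_{i\neq k}x_ix_k\,\partial_{x_i}+x_kx_n\,\partial_{x_n}\quad(1\le k\le n-1),
\]
\[
D=\sum_{i=1}^n x_i\,\partial_{x_i},\qquad T_k=\partial_{x_k}\quad(1\le k\le n-1),
\]
obtained by differentiating \eqref{X} with respect to each parameter. (Strictly, $\Gamma_n$ is this span minus the constant fields; as in the $n=2$ and $n=3$ statements one passes to the vector space it generates.)

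The only remaining point is closure under the Lie bracket. The clean way to see this is the general fact, valid on any manifold, that the bracket of two Killing fields is again a Killing field: if $\mathcal{L}_Ag=0$ and $\mathcal{L}_Bg=0$ then $\mathcal{L}_{[A,B]}g=\mathcal{L}_A\mathcal{L}_Bg-\mathcal{L}_B\mathcal{L}_Ag=0$, so $[A,B]\in\mathfrak{so}(n,1)$. Since $\Gamma_n\subseteq\mathfrak{so}(n,1)$ and both are finite-dimensional, this already shows $[\Gamma_n,\Gamma_n]\subseteq\mathfrak{so}(n,1)$; combined with the subspace property this gives that the span of $\Gamma_n$ is a Lie subalgebra of $\mathfrak{so}(n,1)$, of dimension $2n-1$ (the $n$ missing generators being, as in the $n=3$ case, the pure translation $\partial_{x_n}$ and the $\binom{n-1}{2}$ rotations in the coordinate planes $x_ix_j$ with $i,j\le n-1$ — indeed $\dim\mathfrak{so}(n,1)=\tfrac{n(n+1)}2$ and $\tfrac{n(n+1)}2-(2n-1)=\tfrac{(n-1)(n-2)}2+? $; I would state the count precisely rather than leave the remark vague). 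For completeness I would also note that one can verify closure directly by a bracket computation on the $2n-1$ generators: $[T_j,T_k]=0$, $[D,T_k]=-T_k$, $[D,G_k]=G_k$, and $[G_j,G_k]$ as well as $[T_j,G_k]$ expand back into linear combinations of the $G_i$, $D$ and $T_i$ — but the $\mathcal{L}g$ argument makes this optional.

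The main obstacle is essentially bookkeeping rather than conceptual: one must be careful that $\Gamma_n$ as defined (the set of \emph{non-constant} vector fields making $\mathbb{H}^n$ an RB-soliton) is not literally a subspace — it omits $0$ — so the statement is really about the linear (equivalently Lie) span; and one must get the dimension count and the list of ``missing generators'' right for general $n$, since the coordinate-plane rotations among $x_1,\dots,x_{n-1}$ that were invisible in the $n=3$ case (where there is only one such rotation) proliferate. The Killing-bracket identity $\mathcal{L}_{[A,B]}=[\mathcal{L}_A,\mathcal{L}_B]$ is what makes the bracket-closure step painless, so I would lead with that and relegate any explicit structure-constant computation to a remark.
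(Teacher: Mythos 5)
Your overall strategy is the same as the paper's: extract the $2n-1$ generators $G_k$, $D$, $T_k$ from the parameters $a_k,b,c_k$ in \eqref{X}, observe that they are Killing fields and hence lie in $\mathfrak{so}(n,1)$, and count dimensions. But the decisive point --- closure of the span under the Lie bracket --- is precisely the step you declare ``optional'', and the argument you offer for it does not prove it. The identity $\mathcal{L}_{[A,B]}g=\mathcal{L}_A\mathcal{L}_Bg-\mathcal{L}_B\mathcal{L}_Ag$ only shows $[A,B]\in\mathfrak{so}(n,1)$; to have a Lie subalgebra you must show that $[A,B]$ lands back in the $(2n-1)$-dimensional span of $G_1,\dots,G_{n-1},D,T_1,\dots,T_{n-1}$, and no general Killing-field fact or dimension count can give that.

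Worse, the bracket relations you assert as a consistency check are false. For $j\neq k$ with $j,k\le n-1$ one computes directly
\begin{equation*}
[T_j,G_k]=\Bigl[\partial_{x_j}\,,\ \tfrac12\Bigl(x_k^2-\sum_{i\neq k}x_i^2\Bigr)\partial_{x_k}+\sum_{i\neq k}x_kx_i\,\partial_{x_i}\Bigr]=x_k\,\partial_{x_j}-x_j\,\partial_{x_k},
\end{equation*}
the Euclidean rotation in the $x_jx_k$-plane. This is a Killing field of $\mathbb{H}^n$, but it is \emph{not} of the form \eqref{X}: its last component vanishes, which in \eqref{X} forces $a_1=\dots=a_{n-1}=b=0$, and then the remaining components are constants, never the linear function $x_k$. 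Hence $[T_j,G_k]$ does not lie in the span of the listed generators, so your claimed closure (``$[T_j,G_k]$ expands back into the $G_i$, $D$, $T_i$'') fails for every $n\ge 3$; it is invisible only for $n=2$, where no such pair $j\neq k$ exists, which is why the $\mathfrak{sl}_2(\mathbb{R})$ case goes through. Note that adjoining the missing rotations $x_j\partial_{x_k}-x_k\partial_{x_j}$ ($j,k\le n-1$) raises the dimension from $2n-1$ to $2n-1+\tfrac{(n-1)(n-2)}{2}=\tfrac{n(n+1)}{2}$, i.e.\ to all of $\mathfrak{so}(n,1)$, so the gap cannot be closed within the span generated by \eqref{X}. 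To be fair, the paper's own proof stops at the same place (Killing bracket plus a dimension count) and you have reproduced that approach faithfully; but as a self-contained proof of the stated theorem, the closure step is a genuine gap, and the computation above shows it is not merely unverified but actually breaks down.
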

\begin{proof}
   The vector field $X$ is defined as a linear combination of various parameters $a_k$, $b$, and $c_k$. By factoring these parameters, we can identify a basis of generating vector fields that constitute the underlying Lie algebra of $X$.

By examining the structure of $X$, we distinguish three types of terms based on their coefficients:
\begin{enumerate}
\item The terms multiplied by $a_k$ (for $k=1, \dots, n-1$): they give $n-1$ vector fields. These vector fields are complex and are related to "boosts" and rotations in hyperbolic space.
 \item The term multiplied by $b$: It gives 1 vector field. It corresponds to a field of expansion (homothetic).
\item The terms multiplied by $c_k$ (for $k=1, \dots, n-1$): they give $n-1$ vector fields. These vector fields are pure translations along the axes $x_1,\dots,x_{n-1}$.
\end{enumerate}
The total number of independent vector fields (generators) that this formula can produce is therefore $(n-1) + 1 + (n-1) = 2n-1$.

We fine that for all \(n\geq 3\), \(2n-1\) is strictly smaller than the dimension of the Lie algebra of isometries of $\mathbb{H}^n$ $\left(\frac{n(n+1)}{2}\right)$.

\end{proof}
\begin{remark}\label{R1}
The generators of this subalgebra for a general $n$ can be classified as follows:
\begin{enumerate}
 \item Expansion Generator ($D$): This vector field is unique and is associated with the parameter $b$:
 $$D =  \sum_{i=1}^n x_i \frac{\partial}{\partial x_i}$$
\item Translation Generators ($T_k$): For all $k \in \{1, \dots, n-1\}$, we have a generator:
 $$T_k = \frac{\partial}{\partial x_k}$$
 These $n-1$ vector fields are associated with the parameters $c_k$.
 \item Boost/rotation generators ($G_k$): For all $k \in \{1, \dots, n-1\}$, we have a generator :
 $$G_k = \frac{1}{2}\left( x_k^2 - \sum_{j \in \{1, \dots, n\} \setminus \{k\}} x_j^2 \right) \frac{\partial}{\partial x_k} + \sum_{j \in \{1, \dots, n\} \setminus \{k\}} ( x_k x_j) \frac{\partial}{\partial x_j}.$$
These $n-1$ vector fields are associated with the terms in $a_k$.
\end{enumerate}
\end{remark}
\begin{proposition}\label{P2}
   The flows of the generators of $\Gamma_n$ are respectively  \[\varphi^{D}_t(x_1^0,...,x_n^0)=e^t(x_1^0,...,x_n^0),\quad \varphi^{T_k}_t(x_1^0,...,x_n^0)=(x_1^0,...,t+x_k^0,x_{k+1}^0,...,x_n^0)\quad \text{and}\quad \varphi^{G_k}_t(x_1^0,...,x_n^0)\] with these components\\
   for $j_1 \in \{1, \dots, n\} \setminus \{k\}$ :$x_{j_1}(t) =r_k(t) \cos(\theta_{k,1})$\\
    for $j_2 \in \{1, \dots, n\} \setminus \{k\}$ :
    $x_{j_2}(t) = r_k(t)\sin(\theta_{k,1})\cos(\theta_{k,2})$,\\
    for $j_3 \in \{1, \dots, n\} \setminus \{k\}$ :
    $x_{j_3}(t) =r_k(t) \sin(\theta_{k,1})\sin(\theta_{k,2})\cos(\theta_{k,3})$\\
    ...\\
    for  $j_{n-2} \in \{1, \dots, n\} \setminus \{k\}$ :
    $x_{j_{n-2}}(t) = r_k(t)\sin(\theta_{k,1})\sin(\theta_{k,2})\dots\sin(\theta_{k,n-3})\cos(\theta_{n-2})$\\
    for $j_{n-1} \in \{1, \dots, n\} \setminus \{k\}$ :
    $x_{j_{n-1}}(t) =r_k(t) \sin(\theta_{k,1})\sin(\theta_{k,2})\dots\sin(\theta_{k,n-3})\sin(\theta_{k,n-2}),$ \[x_k(t)=-2\,\frac{t+ s_{0}}{(t+s_{0})^{2}+ e_{0}^{2}},\quad r_k(t)=\frac{2e_{0}}{(t+s_{0})^{2}+ e_{0}^{2}},\quad s_0+ie_0\in\mathbb{H}^2,\quad \theta_{k,i} \quad\text{constant angles}.\]
\end{proposition}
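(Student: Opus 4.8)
The plan is to integrate, for each of the generators $D$, $T_k$, $G_k$ listed in Remark \ref{R1}, the associated autonomous system $\dot\gamma(t)=V(\gamma(t))$ with initial condition $\gamma(0)=(x_1^0,\dots,x_n^0)$. For $D$ and $T_k$ the systems decouple coordinate by coordinate and integrate immediately: $\dot x_i=x_i$ for all $i$ gives $x_i(t)=e^t x_i^0$, and $\dot x_k=1$, $\dot x_j=0$ ($j\ne k$) gives the pure translation $\varphi^{T_k}_t(x^0)=(x_1^0,\dots,x_k^0+t,\dots,x_n^0)$. The substance of the proposition is therefore the flow of $G_k$.

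For $G_k$ the system reads $\dot x_k=\tfrac12\bigl(x_k^2-\sum_{j\ne k}x_j^2\bigr)$ and $\dot x_j=x_k x_j$ for $j\ne k$. I would follow the same device already used for $n=2,3$: set $u(t)=\sum_{j\ne k}x_j^2(t)$ and $r_k(t)=\sqrt{u(t)}$. Differentiating and using $\dot x_j=x_k x_j$ gives $\dot u=2x_k u$, hence $\dot r_k=x_k r_k$, while $x_k$ itself obeys the Riccati equation $\dot x_k=\tfrac12(x_k^2-u)$. Introducing $z(t)=x_k(t)+i\,r_k(t)$, which takes values in the upper half-plane as long as the trajectory stays in $\mathbb{H}^n$, one checks $\dot z=\dot x_k+i x_k r_k=\tfrac12(x_k^2-u)+i x_k r_k=\tfrac12(x_k+ir_k)^2=\tfrac12 z^2$. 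Integrating $\dot z=\tfrac12 z^2$ (exactly as in dimension $2$) yields $z(t)=-2/(t+s_0+ie_0)$ with $s_0+ie_0=-2/z(0)=-2/(x_k^0+i\,r_k(0))$ and $r_k(0)=\sqrt{\sum_{j\ne k}(x_j^0)^2}$; separating real and imaginary parts produces the stated formulas $x_k(t)=-2\tfrac{t+s_0}{(t+s_0)^2+e_0^2}$ and $r_k(t)=\tfrac{2e_0}{(t+s_0)^2+e_0^2}$.

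It remains to recover the individual coordinates $x_j(t)$ for $j\ne k$. Writing $x_j(t)=r_k(t)\,\omega_j(t)$ with $\sum_{j\ne k}\omega_j(t)^2=1$, the equations $\dot x_j=x_k x_j$ together with $\dot r_k=x_k r_k$ force $\dot\omega_j=0$: the unit vector $(\omega_j)_{j\ne k}$ in the $(n-1)$-dimensional slice orthogonal to $\partial_{x_k}$ is constant along the flow. Expressing this fixed unit vector in spherical coordinates produces constant angles $\theta_{k,1},\dots,\theta_{k,n-2}$ and exactly the displayed component formulas for the indices $j_1,\dots,j_{n-1}\in\{1,\dots,n\}\setminus\{k\}$. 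I expect the main obstacle to be purely organizational: assigning the spherical-coordinate labels consistently (one radial factor $r_k(t)$ and $n-2$ fixed angles distributed over the $n-1$ coordinates with index $\ne k$), and checking that $t\mapsto s_0+ie_0$ genuinely lands in the upper half-plane so that $r_k(t)>0$ and the decomposition $x_j=r_k\omega_j$ is legitimate throughout the maximal interval of definition; the integration of the complex Riccati equation itself is the one already performed for $n=2,3$ and requires no new idea.
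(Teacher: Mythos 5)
Your proposal is correct and follows essentially the same route as the paper: direct integration for $D$ and $T_k$, then for $G_k$ the substitution $u=\sum_{j\ne k}x_j^2$, the complex variable $z=x_k+i\,r_k$ satisfying $\dot z=\tfrac12 z^2$ as in the $n=2,3$ cases, and constancy of the angular direction to recover the individual coordinates in spherical form. In fact your explicit check that $\omega_j=x_j/r_k$ has vanishing derivative supplies a justification the paper only asserts, so there is nothing to correct.
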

\begin{proof}
    \begin{enumerate}
    \item For the translations: the flow equation becomes \[\begin{cases}
        \frac{dx_j(t)}{dt}=0, \quad\text{si } j\ne k,\\
         \frac{dx_k(t)}{dt}=1
    \end{cases}\Leftrightarrow \varphi^{T_k}_t(x_1^0,...,x_n^0)=(x_1^0,...,t+x_k^0,x_{k+1}^0,...,x_n^0).\]
    \item  For expansion: the flow equation becomes  \(\frac{dx_k(t)}{dt}=x_k(t)\Leftrightarrow x_k(t)=e^tx_1^0\) for all \(k\in\{1,...,n\}\) so   \[\varphi^{D}_t(x_1^0,...,x_n^0)=e^t(x_1^0,...,x_n^0).\]
    \item  For \(G_k\): the flow equation becomes  \[\begin{cases}
        \frac{dx_k(t)}{dt}=\frac{1}{2}\left( x_k^2(t) - \sum\limits_{j_\ell \in \{1, \dots, n\} \setminus \{k\}} x_j^2(t) \right)\\
        \frac{dx_{j_l}(t)}{dt}=x_k(t)x_{j_l}(t),\quad j_\ell\in\{1,...,n\}\setminus\{k\},\quad \ell\in \{1,...,n-1\}
    \end{cases}.\]
    Let's put \[r_k(t)=\sqrt{\sum\limits_{j_\ell \in \{1, \dots, n\} \setminus \{k\}} x_{j_\ell}^2(t) }\quad \text{and}\quad u_k(t)=\sum\limits_{j_\ell \in \{1, \dots, n\} \setminus \{k\}} x_{j_\ell}^2(t).\] We find that  \[\frac{du_k(t)}{dt}=2x_k(t)u_k(t),\] the system becomes \[\begin{cases}
        \frac{dx_k(t)}{dt}=\frac{1}{2}\left( x_k^2(t) -u_k(t)\right)\\
        \frac{du_k(t)}{dt}=2x_k(t)u_k(t).
    \end{cases}\] Let's put \(z_k(t)=x_k(t)+i\sqrt{u_k(t)}\), the demonstration of the Proposition \ref{P1} shows that \[\begin{cases}
        x_k(t)= -2\,\frac{t+ s_{0}}{(t+s_{0})^{2}+ e_{0}^{2}}\\
        r_k(t)=\frac{2e_{0}}{(t+s_{0})^{2}+ e_{0}^{2}},\\ s_0+ie_0\in\mathbb{H}^2.
    \end{cases}\]
    Since \(r_k^2(t)=\sum\limits_{j_\ell \in \{1, \dots, n\} \setminus \{k\}} x_{j_\ell}^2(t) \)  thus their expressions are entirely determined by the $n-2$ initial angles, which we named $\theta_1, \theta_2, \dots, \theta_{n-2}$.
    
    For  $j_1 \in \{1, \dots, n\} \setminus \{k\}$, we have
    $$x_{j_1}(t) =r_k(t) \cos(\theta_1)$$
    For  $j_2 \in \{1, \dots, n\} \setminus \{k\}$, we have
    $$x_{j_2}(t) = r_k(t)\sin(\theta_1)\cos(\theta_2)$$
    For $j_3 \in \{1, \dots, n\} \setminus \{k\}$, we have 
    $$x_{j_3}(t) =r_k(t) \sin(\theta_1)\sin(\theta_2)\cos(\theta_3)$$
    ... and so on, up to the last non-varying component.\\
    For $j_{n-2} \in \{1, \dots, n\} \setminus \{k\}$, we have 
    $$x_{j_{n-2}}(t) = r_k(t)\sin(\theta_1)\sin(\theta_2)\dots\sin(\theta_{n-3})\cos(\theta_{n-2})$$
    For  $j_{n-1} \in \{1, \dots, n\} \setminus \{k\}$, we have 
    $$x_{j_{n-1}}(t) =r_k(t) \sin(\theta_1)\sin(\theta_2)\dots\sin(\theta_{n-3})\sin(\theta_{n-2})$$
\end{enumerate}
\end{proof}

\begin{remark}\label{R2}
    On  \(\mathbb{H}^{2n+1}\), the vector field $X$ is defined by 
\begin{equation}\label{X2n}
    X(x_1, \dots, x_{2n+1}) = \sum_{k=1}^{2n} X_k \frac{\partial}{\partial x_k} + X_{2n+1} \frac{\partial}{\partial x_{2n+1}}
\end{equation}
where the components $X_k$ are:\\
For $k \in \{1, \dots, 2n\}$;
$$X_k = \frac{a_k}{2} \left( x_k^2 - \sum_{j \neq k}^{2n+1} x_j^2 \right) + \left( \sum_{\substack{i=1 \\ i \neq k}}^{2n} a_i x_i + b \right) x_k + c_k.$$
For $X_{2n+1}$;
$$X_{2n+1} = \left( \sum_{k=1}^{2n} a_k x_k + b \right) x_{2n+1}$$
The dual form $w$ of the vector field $X$, in the case of a hyperbolic metric, is:
$$w = \sum_{i=1}^{2n+1} \frac{X_i}{x_{2n+1}^2} dx_i$$

The differential form $w$ is defined as $$w = \sum_{i=1}^{2n+1} w_i dx_i = \sum_{i=1}^{2n+1} \frac{X_i}{x_{2n+1}^2} dx_i$$
\end{remark}
\begin{theorem}\label{tn}
    Consider the vector field of the Remark \ref{R2} whose dual form is denoted $\omega$ and an antisymmetry matrix $M$ of order $2n\times 2n$ such that $\left(M_{ij}=a_ic_j-a_jc_j\right)_{1\leq i,j\leq 2n}$. The manifold $(\mathbb{H}^{2n+1},\omega)$ is a contact manifold if and only if $\det(M)\ne0.$
\end{theorem}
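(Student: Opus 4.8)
The plan is to reduce the contact condition $\omega\wedge(d\omega)^n\neq 0$ at every point of $\mathbb{H}^{2n+1}$ to the single algebraic condition $\det(M)\neq 0$, generalizing the computation in Theorem~\ref{T2} (the case $n=1$). First I would record $d\omega$ explicitly. Writing $\omega=\sum_{i=1}^{2n+1}w_i\,dx_i$ with $w_i=X_i/x_{2n+1}^2$, one computes, exactly as in \eqref{w}, that for $1\le i<j\le 2n$ the coefficient of $dx_i\wedge dx_j$ in $d\omega$ is $\frac{2}{x_{2n+1}^2}(a_ix_j-a_jx_i)$, while the coefficients of $dx_i\wedge dx_{2n+1}$ are rational functions with denominator $x_{2n+1}^3$ whose precise form will not matter. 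The key structural observation is that the ``horizontal'' part of $d\omega$, i.e.\ its component in $\bigwedge^2\langle dx_1,\dots,dx_{2n}\rangle$, is $\frac{2}{x_{2n+1}^2}\,\Omega_M$ where $\Omega_M=\sum_{i<j}(a_ix_j-a_jx_i)\,dx_i\wedge dx_j$ is (up to the conformal factor) the constant-coefficient-in-disguise two-form associated with the antisymmetric matrix $(a_ix_j-a_jx_i)$. I would then expand $\omega\wedge(d\omega)^n$ and argue that, because $\omega\wedge(d\omega)^n$ is a top form on a $(2n+1)$-manifold, every term in the expansion must use the factor $dx_{2n+1}$ exactly once; collecting those terms, the only way to produce $dx_1\wedge\cdots\wedge dx_{2n+1}$ is to take the $dx_{2n+1}$ from the $w_{2n+1}\,dx_{2n+1}$ summand of $\omega$ (all other choices force a repeated $dx_{2n+1}$ from a $dx_i\wedge dx_{2n+1}$ term of $d\omega$ together with the $dx_{2n+1}$ from another such term, or from $\omega$, giving zero). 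Hence
$$\omega\wedge(d\omega)^n=w_{2n+1}\,dx_{2n+1}\wedge(d\omega_{\mathrm{hor}})^n=\frac{X_{2n+1}}{x_{2n+1}^2}\cdot\frac{2^n}{x_{2n+1}^{2n}}\,dx_{2n+1}\wedge(\Omega_M)^n.$$

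Next I would invoke the standard linear-algebra fact that for an antisymmetric matrix $M$ of even order $2n$ with associated two-form $\Omega_M=\sum_{i<j}M_{ij}\,dx_i\wedge dx_j$ one has $(\Omega_M)^n=n!\,\mathrm{Pf}(M)\,dx_1\wedge\cdots\wedge dx_{2n}$, and $\mathrm{Pf}(M)^2=\det(M)$; in particular $(\Omega_M)^n\neq 0$ as a form if and only if $\det(M)\neq 0$. But here $M$ is evaluated not at a fixed matrix but at the point-dependent antisymmetric matrix $M(x)=(a_ix_j-a_jx_i)$; a direct computation shows $\mathrm{Pf}\big((a_ix_j-a_jx_i)\big)$ is a polynomial in $x_1,\dots,x_{2n}$ that, one checks, is actually a nonzero \emph{constant} multiple of $\mathrm{Pf}(M)$ where $M_{ij}=a_ic_j-a_jc_i$ is the matrix in the statement --- this is the point where the coefficients $c_j$ enter, via the $dx_i\wedge dx_{2n+1}$ terms of $d\omega$ that I temporarily discarded. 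So I must be more careful: rather than dropping those terms, I expand $\omega\wedge(d\omega)^n$ keeping track of the one term forced to carry $dx_{2n+1}$, and the resulting top-form coefficient is a sum over ways of distributing $n$ copies of $d\omega$, one of which supplies a $dx_i\wedge dx_{2n+1}$ factor (contributing a $c$-type or $x$-type term) and the remaining $n-1$ supply horizontal factors. Reorganizing this sum is exactly the cofactor/Pfaffian expansion of $\det(M)$ along with the soliton structure $X_{2n+1}=(\sum a_kx_k+b)x_{2n+1}$, and it collapses to
$$\omega\wedge(d\omega)^n=\frac{2^n\,n!\,(c_1a_2-c_2a_1+\cdots)}{x_{2n+1}^{\,2n+1}}\,\mathrm{Pf}(M)\,dx_1\wedge\cdots\wedge dx_{2n+1}=\frac{C\,\mathrm{Pf}(M)}{x_{2n+1}^{\,2n+1}}\,dx_1\wedge\cdots\wedge dx_{2n+1}$$
for a nonzero numerical constant $C$, using $x_{2n+1}>0$ on $\mathbb{H}^{2n+1}$.

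Finally, since $x_{2n+1}\neq 0$ throughout $\mathbb{H}^{2n+1}$, this top form vanishes identically if $\mathrm{Pf}(M)=0$ and vanishes nowhere if $\mathrm{Pf}(M)\neq 0$; by $\mathrm{Pf}(M)^2=\det(M)$ this is equivalent to $\det(M)\neq 0$, which proves both directions. I expect the main obstacle to be the combinatorial bookkeeping in the previous paragraph: verifying that the apparently $x$-dependent Pfaffian of $(a_ix_j-a_jx_i)$ --- corrected by the $dx_{2n+1}$ contributions involving the $c_j$'s --- reorganizes into exactly $\mathrm{Pf}(M)$ with $M_{ij}=a_ic_j-a_jc_i$, up to a nonzero constant independent of $x$. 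A clean way to handle this is to note that the vector $(a_1,\dots,a_{2n})$ appears in every row of $(a_ix_j-a_jx_i)$, so that matrix has rank $\le 2$ and its own Pfaffian-type invariants degenerate; the genuine rank-$2n$ information lives in how $d\omega$ couples the $a_i$ with the $c_i$ through the curvature terms, and this is precisely captured by $M_{ij}=a_ic_j-a_jc_i$. Making that last sentence rigorous --- ideally by a change of basis that puts $\omega$ in a normal form at a single point and then arguing by homogeneity/equivariance under the isometry flows of Proposition~\ref{P2} --- is the crux of the argument; once it is in place, the contact condition follows immediately.
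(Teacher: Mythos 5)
You follow the same route as the paper: write $\omega=\omega_{\mathrm{hor}}+w_{2n+1}dx_{2n+1}$, split $d\omega=\Omega+\Lambda$ with $\Omega=\sum_{1\le i<j\le 2n}\tfrac{2(a_ix_j-a_jx_i)}{x_{2n+1}^2}dx_i\wedge dx_j$ and $\Lambda$ the part carrying $dx_{2n+1}$, use $\Lambda\wedge\Lambda=0$ so that $(d\omega)^n=\Omega^n+n\,\Omega^{n-1}\wedge\Lambda$, and then identify the top-form coefficient with a Pfaffian of $M$. But the proposal has a genuine gap, in two places. First, your displayed identity $\omega\wedge(d\omega)^n=w_{2n+1}\,dx_{2n+1}\wedge(d\omega_{\mathrm{hor}})^n$ is false: the discarded term $n\bigl(\sum_{k\le 2n}w_k\,dx_k\bigr)\wedge\Omega^{n-1}\wedge\Lambda$ uses $dx_{2n+1}$ only once (it comes from $\Lambda$) and is exactly where the $c_i$ enter; already for $n=1$ your formula would give a coefficient proportional to $a_1x_2-a_2x_1$ rather than $c_1a_2-c_2a_1$ as in Theorem~\ref{T2}. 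You notice this and retract it, but then --- decisively --- the corrected expansion is never carried out: the claim that the coefficient collapses to $C\,\mathrm{Pf}(M)/x_{2n+1}^{2n+1}$ with $C$ a nonzero constant and $M_{ij}=a_ic_j-a_jc_i$ is precisely the computational content of the theorem (the paper's two surviving terms $w_{2n+1}dx_{2n+1}\wedge\Omega^n+n\bigl(\sum_k w_k dx_k\bigr)\wedge\Omega^{n-1}\wedge\Lambda$ collapsing to $\tfrac{2^n}{x_{2n+1}^{2n+1}}\sqrt{\det M}\,dx_1\wedge\cdots\wedge dx_{2n+1}$), and you explicitly defer it as ``the crux''. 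An outline whose decisive step is acknowledged as missing is not a proof.

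The rank-two observation you end with is in fact the quickest way to close the gap, and it also shows the statement is degenerate beyond $n=1$. Since $\Omega=\tfrac{2}{x_{2n+1}^2}\,\alpha\wedge\beta$ with $\alpha=\sum_{i\le 2n}a_i\,dx_i$ and $\beta=\sum_{j\le 2n}x_j\,dx_j$, the $2$-form $\Omega$ is decomposable, hence $\Omega^2=0$; moreover $\Lambda=\bigl(\tfrac{2}{x_{2n+1}^3}\sum_{k\le 2n}X_k\,dx_k+\tfrac{2}{x_{2n+1}}\alpha\bigr)\wedge dx_{2n+1}$ while $\omega_{\mathrm{hor}}=\tfrac{1}{x_{2n+1}^2}\sum_{k\le 2n}X_k\,dx_k$. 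Consequently, for $n\ge 3$ both surviving terms vanish because $\Omega^n=\Omega^{n-1}=0$, and for $n=2$ the first term contains $\Omega^2=0$ while each summand of the second repeats either $\alpha$ or $\sum_k X_k\,dx_k$; so $\omega\wedge(d\omega)^n\equiv 0$ for every $n\ge 2$. Consistently, $M=ac^{T}-ca^{T}$ has rank at most two, so $\det M=0$ automatically once $2n\ge 4$; in particular your ``nonzero constant times $\mathrm{Pf}(M)$'' can never be nonzero for $n\ge 2$, and the only substantive case of the theorem is $n=1$, which is Theorem~\ref{T2}. Either make this degeneracy explicit (which settles both directions at once for $n\ge2$), or actually perform the Pfaffian bookkeeping you describe; as submitted, the key identification is asserted, not proved.
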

\begin{proof}
 Let's calculate the exterior derivative $dw$. The general formula for the exterior derivative of a 1-form is as follows:   
\begin{eqnarray}
    dw = \sum_{i < j} \left( \frac{\partial w_j}{\partial x_i} - \frac{\partial w_i}{\partial x_j} \right) dx_i \wedge dx_j,
    \end{eqnarray}
where $w_i = \frac{X_i}{x_{2n+1}^2}$.
We must therefore calculate the partial derivatives of each component $w_i$ with respect to all variables $x_j$.\\

Case 1: $i, j \in \{1, \dots, 2n\}$
For this case, we must calculate $\frac{\partial w_j}{\partial x_i} - \frac{\partial w_i}{\partial x_j}$ where $i, j \in \{1, \dots, 2n\}$.
$$w_i = \frac{X_i}{x_{2n+1}^2} = \frac{1}{x_{2n+1}^2} \left[ \frac{a_i}{2} \left( x_i^2 - \sum_{k \neq i}^{2n+1} x_k^2 \right) + \left( \sum_{\substack{l=1 \\ l \neq i}}^{2n} a_l x_l + b \right) x_i + c_i \right]$$
$$w_j = \frac{X_j}{x_{2n+1}^2} = \frac{1}{x_{2n+1}^2} \left[ \frac{a_j}{2} \left( x_j^2 - \sum_{k \neq j}^{2n+1} x_k^2 \right) + \left( \sum_{\substack{l=1 \\ l \neq j}}^{2n} a_l x_l + b \right) x_j + c_j \right]$$
Let's calculate the partial derivatives. Since $i, j \in \{1, \dots, 2n\}$, the derivative with respect to $x_i$ or $x_j$ only concerns the numerator of $w_i$ and $w_j$ and does not change the denominator $x_{2n+1}^2$.
$$\frac{\partial w_i}{\partial x_j} = \frac{1}{x_{2n+1}^2} \frac{\partial X_i}{\partial x_j}$$
For $i \neq j$:
$$\frac{\partial X_i}{\partial x_j} = \frac{a_i}{2}(-2x_j) + a_j x_i = a_j x_i - a_i x_j$$
$$\frac{\partial X_j}{\partial x_i} = \frac{a_j}{2}(-2x_i) + a_i x_j = a_i x_j - a_j x_i$$
Substituting into the formula for $dw$, we get:
$$\frac{\partial w_j}{\partial x_i} - \frac{\partial w_i}{\partial x_j} = \frac{1}{x_{2n+1}^2} \left(\frac{\partial X_j}{\partial x_i} - \frac{\partial X_i}{\partial x_j}\right) = \frac{1}{x_{2n+1}^2} ((a_i x_j - a_j x_i) - (a_j x_i - a_i x_j)) = \frac{2(a_i x_j - a_j x_i)}{x_{2n+1}^2}$$
So, for $i, j \in \{1, \dots, 2n\}$, the term corresponding to $dx_i \wedge dx_j$ in $dw$ is:
$$\frac{2(a_i x_j - a_j x_i)}{x_{2n+1}^2} dx_i \wedge dx_j$$
---
Case 2: $i \in \{1, \dots, 2n\}$ and $j = 2n+1$
We must calculate $\frac{\partial w_{2n+1}}{\partial x_i} - \frac{\partial w_i}{\partial x_{2n+1}}$.
$$w_{2n+1} = \frac{X_{2n+1}}{x_{2n+1}^2} = \frac{\left(\sum\limits_{k=1}^{2n} a_k x_k + b\right) x_{2n+1}}{x_{2n+1}^2} = \frac{\sum\limits_{k=1}^{2n} a_k x_k + b}{x_{2n+1}}$$
For $i \in \{1, \dots, 2n\}$:
$$\frac{\partial w_{2n+1}}{\partial x_i} = \frac{a_i}{x_{2n+1}}$$
Now, let's calculate $\frac{\partial w_i}{\partial x_{2n+1}}$.
$$w_i = \frac{X_i}{x_{2n+1}^2} = \frac{1}{x_{2n+1}^2} \left[ \frac{a_i}{2} \left( x_i^2 - \sum_{k \neq i}^{2n+1} x_k^2 \right) + \left( \sum_{\substack{l=1 \\ l \neq i}}^{2n} a_l x_l + b \right) x_i + c_i \right].$$
The term $\sum_{k \neq i}^{2n+1} x_k^2$ includes $x_{2n+1}^2$. The derivative with respect to $x_{2n+1}$ concerns only this term and the denominator $x_{2n+1}^2$.
$$\frac{\partial X_i}{\partial x_{2n+1}} = \frac{\partial}{\partial x_{2n+1}} \left[ \frac{a_i}{2} \left( x_i^2 - \left(\sum_{k\ne i}^{2n+1} x_{k}^2\right)+ \left( \sum_{\substack{l=1 \\ l \neq i}}^{2n} a_l x_l + b \right) x_i + c_i \right) \right] = \frac{a_i}{2}(-2x_{2n+1}) = -a_i x_{2n+1}$$
Now, we apply the quotient rule to calculate $\frac{\partial w_i}{\partial x_{2n+1}}$:
$$\frac{\partial w_i}{\partial x_{2n+1}} = \frac{\frac{\partial X_i}{\partial x_{2n+1}} x_{2n+1}^2 - X_i \cdot 2x_{2n+1}}{(x_{2n+1}^2)^2} = \frac{(-a_i x_{2n+1}) x_{2n+1}^2 - X_i \cdot 2x_{2n+1}}{x_{2n+1}^4} = \frac{-a_i x_{2n+1}^2 - 2X_i}{x_{2n+1}^3}$$
The term $\frac{\partial w_{2n+1}}{\partial x_i} - \frac{\partial w_i}{\partial x_{2n+1}}$ becomes:
$$\frac{a_i}{x_{2n+1}} - \left( \frac{-a_i x_{2n+1}^2 - 2X_i}{x_{2n+1}^3} \right) = \frac{a_i x_{2n+1}^2 + a_i x_{2n+1}^2 + 2X_i}{x_{2n+1}^3} = \frac{2a_i x_{2n+1}^2 + 2X_i}{x_{2n+1}^3} = \frac{2}{x_{2n+1}^3}(X_i + a_i x_{2n+1}^2).$$
Combining the results from both cases, the exterior derivative $dw$ is given by:
\begin{equation}\label{dw}
    dw = \sum_{1 \le i < j \le 2n} \frac{2(a_i x_j - a_j x_i)}{x_{2n+1}^2} dx_i \wedge dx_j + \sum_{i=1}^{2n} \frac{2(X_i + a_i x_{2n+1}^2)}{x_{2n+1}^3} dx_i \wedge dx_{2n+1}.
\end{equation}
Now we will compute  $w \wedge (dw)^n$ in dimension $2n+1$. 
We consider a 1-form $w$ and its exterior derivative $dw$ in dimension $2n+1$. To simplify the calculation of the exterior product $w \wedge (dw)^n$, we decompose these forms.

The 1-form $w$ is partitioned into a part on the first $2n$ coordinates and a part on the last coordinate $x_{2n+1}$:
$$w = \sum_{k=1}^{2n+1} w_k \, dx_k = \left( \sum_{k=1}^{2n} w_k \, dx_k \right) + w_{2n+1} \, dx_{2n+1}$$

The 2-form $dw$ is similarly decomposed into two parts: a 2-form $\Omega$ that does not depend on $dx_{2n+1}$, and a 2-form $\Lambda$ that does.
$$dw = \Omega + \Lambda$$
with:
$$\Omega = \sum_{1 \le i < j \le 2n} \frac{2(a_i x_j - a_j x_i)}{x_{2n+1}^2} \, dx_i \wedge dx_j$$
$$\Lambda = \sum_{k=1}^{2n} \frac{2(X_k + a_k x_{2n+1}^2)}{x_{2n+1}^3} \, dx_k \wedge dx_{2n+1}.$$
The calculation of $(dw)^n = (\Omega + \Lambda)^n$ is simplified by the fact that $\Lambda \wedge \Lambda = 0$. Indeed, $\Lambda$ contains the differential $dx_{2n+1}$, so the exterior product of $\Lambda$ with itself vanishes. Consequently, only the terms of the binomial expansion of $(\Omega+\Lambda)^n$ that contain $\Lambda$ at most once are non-zero.
$$(dw)^n = \Omega^n + n \, \Omega^{n-1} \wedge \Lambda.$$
We calculate the final product using the decomposition above:
\begin{align*}
w \wedge (dw)^n &= \left( w_{2n+1} \, dx_{2n+1} + \sum_{k=1}^{2n} w_k \, dx_k \right) \wedge \left( \Omega^n + n \, \Omega^{n-1} \wedge \Lambda \right) \\
&= \left( w_{2n+1} \, dx_{2n+1} \wedge \Omega^n \right) + \left( w_{2n+1} \, dx_{2n+1} \wedge n \Omega^{n-1} \wedge \Lambda \right) \\
&+ \left( \sum_{k=1}^{2n} w_k \, dx_k \wedge \Omega^n \right) + \left( \left( \sum_{k=1}^{2n} w_k \, dx_k \right) \wedge n \Omega^{n-1} \wedge \Lambda \right).
\end{align*}
Two of these terms are zero:
\begin{itemize}
    \item $w_{2n+1} \, dx_{2n+1} \wedge n \Omega^{n-1} \wedge \Lambda = 0$, because it contains the differential $dx_{2n+1}$ twice.
    \item $\left( \sum\limits_{k=1}^{2n} w_k \, dx_k \right) \wedge \Omega^n = 0$, because $\Omega^n$ is a $2n$-form on the differentials $dx_1, \dots, dx_{2n}$. The exterior product with another of these differentials vanishes.
\end{itemize}
Thus, only two non-zero terms remain:
$$w \wedge (dw)^n = w_{2n+1} \, dx_{2n+1} \wedge \Omega^n + n \left( \sum_{k=1}^{2n} w_k \, dx_k \right) \wedge \Omega^{n-1} \wedge \Lambda.$$
A rigorous calculation shows that the terms depending on the variables $x_i$ in the above expression completely cancel out, generalizing the phenomenon observed for the case $n=1$. The final coefficient is a simple expression in terms of the coefficients $a_k$ and $c_k$.

The final result is given by:
$$w \wedge (dw)^n = \frac{2^n}{x_{2n+1}^{2n+1}} \,\sqrt{\text{det}(M)} \, dx_1 \wedge dx_2 \wedge \dots \wedge dx_{2n+1}$$
where $M$ is the $2n \times 2n$ antisymmetric matrix defined by $M_{ij} = a_i c_j - a_j c_i$.

The exterior product $\omega \wedge (d\omega)^n$ is non-zero if and only if the matrix $M$ is invertible.
\end{proof}
\begin{proposition}\label{PP1}
If \(X\in\Gamma_{n}\) then its dual form is not closed. Moreover, \(X\) preserves its dual form.
\end{proposition}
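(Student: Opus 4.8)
The plan is to treat the two assertions independently, reusing the computations already carried out for $n=2,3$.

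\textbf{Non-closedness of $\omega$.} I would start from the general form of the dual 1-form in \eqref{Xmol} and compute $d\omega$ exactly as in the proofs of Theorem~\ref{T2} and Theorem~\ref{tn}. The same bookkeeping shows that, writing $X=\sum_i X_i\,\partial_{x_i}$ for the components appearing in \eqref{X}, the coefficient of $dx_i\wedge dx_j$ for $1\le i<j\le n-1$ is $\dfrac{2(a_ix_j-a_jx_i)}{x_n^2}$, and the coefficient of $dx_i\wedge dx_n$ is $\dfrac{2\,(X_i+a_ix_n^2)}{x_n^3}$, generalizing \eqref{w} and \eqref{dw}. Imposing $d\omega=0$ and comparing coefficients of monomials in the $x_i$: the terms $a_ix_j-a_jx_i$ force $a_1=\dots=a_{n-1}=0$; then the constant and linear parts of $X_i$ force $b=0$ and $c_1=\dots=c_{n-1}=0$; hence $X\equiv 0$, which contradicts $X\in\Gamma_n$ since $\Gamma_n$ consists of non-constant fields. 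Therefore $\omega$ is not closed.

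\textbf{Invariance of $\omega$ under $X$.} Here I would give the conceptual argument rather than a brute-force Cartan-formula computation. By the result of \cite{bousso2025ricci} recalled in Section~\ref{sec:rappel}, every $X\in\Gamma_n$ is a Killing field, so $\mathcal{L}_X g=0$; and by \eqref{bmol} the dual form is precisely the metric dual $\omega=g(X,\cdot)$. Using the Leibniz rule for the Lie derivative together with antisymmetry of the bracket, one gets, for an arbitrary vector field $Z$,
\[
(\mathcal{L}_X\omega)(Z)=X\big(g(X,Z)\big)-g(X,[X,Z])=(\mathcal{L}_X g)(X,Z)+g([X,X],Z)=0,
\]
since $[X,X]=0$. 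Hence $\mathcal{L}_X\omega=0$, i.e.\ $X$ preserves $\omega$. Equivalently: the flow of $X$ (described in Proposition~\ref{P2}) consists of isometries, which commute with the musical isomorphism, and $X$ is invariant under its own flow, so $\varphi_t^*\omega=\omega$ and differentiating at $t=0$ gives the claim. If one prefers to stay in the computational style of the $n=2,3$ cases, the same conclusion follows from $\mathcal{L}_X\omega=i_Xd\omega+d(i_X\omega)$ using $i_X\omega=\omega(X)=g(X,X)=x_n^{-2}\sum_{i=1}^n X_i^2$, the expression for $d\omega$ above, and the contraction identity $i_X(dx_i\wedge dx_j)=X_i\,dx_j-X_j\,dx_i$; each $dx_k$-component of $i_Xd\omega$ cancels the corresponding component of $-d(i_X\omega)$ after the same simplifications (essentially the identity $2(X_i+a_ix_n^2)$ controlling the mixed terms) that made the $n=2$ and $n=3$ computations collapse to $0$.

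The main obstacle is only present in the explicit route: it is the combinatorial control of the double sums $\sum_{i<j}(a_ix_j-a_jx_i)$ and of the derivatives of $\sum_i X_i^2$, and checking that all $x_i$-dependent terms cancel for every $k$. The conceptual route sidesteps this entirely, so there the only point requiring care is to invoke correctly that elements of $\Gamma_n$ are Killing and that $\omega$ in \eqref{Xmol} is genuinely the metric dual $g(X,\cdot)$, not a rescaled form.
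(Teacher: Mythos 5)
Your proof is correct, and the two halves relate differently to the paper's argument. For non-closedness you do exactly what the paper does: read off from the explicit expression for $d\omega$ (the coefficients $2(a_ix_j-a_jx_i)/x_n^2$ and $2(X_i+a_ix_n^2)/x_n^3$, as in \eqref{dw}) that $d\omega=0$ forces all $a_k$, $b$, $c_k$ to vanish, contradicting non-constancy of $X$; your monomial bookkeeping is, if anything, slightly more explicit than the paper's one-line deduction, and it has the small advantage of being stated for general $n$ rather than only the odd-dimensional case the paper's formula \eqref{dw} was derived for (the derivation of $d\omega$ never uses parity, so this is legitimate). For the invariance $\mathcal{L}_X\omega=0$, however, you take a genuinely different and much shorter route: since every $X\in\Gamma_n$ is Killing (the result of \cite{bousso2025ricci} recalled in Section \ref{sec:rappel}) and $\omega=g(X,\cdot)$ by \eqref{bmol}, the Leibniz rule gives $(\mathcal{L}_X\omega)(Z)=(\mathcal{L}_Xg)(X,Z)+g([X,X],Z)=0$, or equivalently the flow of $X$ acts by isometries fixing $X$, so $\varphi_t^*\omega=\omega$. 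This identity is correct as you state it, and it replaces the paper's several-page Cartan-formula computation ($i_X\omega=S/x_n^2$, component-wise cancellation of $i_Xd\omega$ against $-d(i_X\omega)$ using $X_n=Hx_n$) by a structural argument; what the conceptual route buys is brevity, independence of coordinates, and immunity to bookkeeping errors, while the paper's explicit route buys a self-contained verification that does not rely on the Killing property from \cite{bousso2025ricci} and produces the auxiliary formulas (for $i_X\omega$ and $i_Xd\omega$) that the paper then reuses in Lemma \ref{l} and the splitting theorem. The only points you must cite carefully — and you do — are that $\Gamma_n$ consists of Killing fields and that \eqref{Xmol} is the genuine metric dual, not a rescaling.
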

\begin{proof}
Let \(X\in\Gamma_{2n+1}\) and let \(\omega\)  be its dual form. 

According to \eqref{X2n} and \eqref{dw}, we have  that
 \(d\omega=0\) if and only if  \(X=0\), which would mean that the coefficients \(a_i,\) \(b\) and \(c_i\) are zero. This is impossible since \(X\) is not constant.

We have:
For $i=1,\dots,n-1$
$$
X_i=\frac{a_i}{2}\Big(x_i^2-\sum_{j\neq i}x_j^2\Big)+\Big(\sum_{\substack{r=1\\r\neq i}}^{n-1} a_r x_r + b\Big)x_i + c_i,
$$
and
$$
X_n=\Big(\sum_{k=1}^{n-1} a_k x_k + b\Big)x_n=:H x_n,
$$
with $H=\sum\limits_{k=1}^{n-1} a_k x_k + b$. The 1-form is
$$
w=\sum_{i=1}^n\frac{X_i}{x_n^2}\,dx_i.
$$
We also have
$$
dw=\sum_{1\le i<j\le n-1}\frac{2(a_i x_j-a_j x_i)}{x_n^2}\,dx_i\wedge dx_j
+\sum_{i=1}^{n-1}\frac{2(X_i+a_i x_n^2)}{x_n^3}\,dx_i\wedge dx_n.
$$
We want to prove $\mathcal L_X w=0$. Useful reminder:
$$
\mathcal L_X w = d(i_X w)+ i_X(dw).
$$

Let us compute now  $i_X w$. 
By definition,
$$
i_X w=\sum_{i=1}^n \frac{X_i}{x_n^2}\, X_i=\frac{1}{x_n^2}\sum_{i=1}^n X_i^2.
$$
Let's set
$$
S:=\sum_{i=1}^n X_i^2.
$$
So
$$
i_X w=\frac{S}{x_n^2}.
$$
Derivative:
$$
d(i_X w)=d\left(\frac{S}{x_n^2}\right)=\frac{dS}{x_n^2}-2\frac{S}{x_n^3}\,dx_n.
$$
In other words, the component in front of $dx_m$ of $d(i_Xw)$ is $\dfrac{\partial_{x_m}S}{x_n^2}$ for $m=1,\dots,n$, with the exception that for $m=n$ there is also the term $-2S/x_n^3$ included.

We must prove that
$$
i_X(dw)=-\frac{dS}{x_n^2}+2\frac{S}{x_n^3}\,dx_n,
$$
which will give $\mathcal L_X w=0$.\\
For $m\le n-1$ and $k\le n-1$ we have (direct calculation from the definition of $X_k$):
$$
\partial_{x_m}X_k=
\begin{cases}
H & \text{if } k=m,\\[4pt]
-\,a_k x_m + a_m x_k & \text{if } k\neq m,
\end{cases}
$$
and for $k=n$ (still $m\le n-1$):
$$
\partial_{x_m}X_n=\partial_{x_m}(H x_n)=a_m x_n.
$$
Finally $\partial_{x_n}X_k= -a_k x_n$ for $k\le n-1$ and $\partial_{x_n}X_n=H$ (but we will essentially only use the previous relations).\\
By definition $S=\sum\limits_{k=1}^n X_k^2$, so
$$
\partial_{x_m}S=2\sum_{k=1}^n X_k\,\partial_{x_m}X_k.
$$
For $m\le n-1$ we substitute $\partial_{x_m}X_k$ with the formulas from section 2:
$$
\partial_{x_m}S
= 2\left( X_m H + \sum_{\substack{k=1 \\ k\ne m}}^{n-1} X_k(-a_k x_m + a_m x_k) + X_n a_m x_n\right)
$$
\begin{equation}\label{eq1}
\partial_{x_m}S=2\left( X_m H - x_m\sum_{\substack{k\ne m}} a_k X_k + a_m\sum_{\substack{k\ne m}} x_k X_k + a_m x_n X_n\right).
\end{equation}

To compute  $i_X(dw)$, 
we write $i_X(dw)$ in components. With
$$
dw=\sum_{1\le i<j\le n-1} C_{ij}\,dx_i\wedge dx_j+\sum_{i=1}^{n-1}D_i\,dx_i\wedge dx_n,
$$
where $C_{ij}=\dfrac{2(a_i x_j-a_j x_i)}{x_n^2}$ and $D_i=\dfrac{2(X_i+a_i x_n^2)}{x_n^3}$, we have
$$
i_X(dw)=\sum_{i<j} C_{ij}(X_i dx_j - X_j dx_i)+\sum_{i=1}^{n-1} D_i(X_i dx_n - X_n dx_i).
$$
Let's collect the coefficients in front of $dx_m$ for $m\le n-1$. As we verify by summation (case $i<m<j$ etc.) the contributions of the $C_{ij}$-terms aggregate into:
$$
\text{(contrib. from the first sum) }=\frac{2}{x_n^2}\sum_{\substack{k=1\\k\ne m}}^{n-1}(a_k x_m-a_m x_k)X_k.
$$
The second sum (the one with $D_i$) brings only the term $-D_m X_n$ for $dx_m$, i.e.
$$
\text{(contrib. from the second sum) }=-\frac{2(X_m+a_m x_n^2)X_n}{x_n^3}.
$$
So, for $m\le n-1$,
\begin{equation}\label{eq2}
    \big(i_X(dw)\big)_{dx_m}
=\frac{2}{x_n^2}\sum_{k\ne m}^{\,n-1}(a_k x_m-a_m x_k)X_k
-\frac{2(X_m+a_m x_n^2)X_n}{x_n^3}
\end{equation}
We must show that (for $m\le n-1$)
$$
\big(i_X(dw)\big)_{dx_m} = -\frac{\partial_{x_m}S}{x_n^2}.
$$
Note that by multiplying \eqref{eq2} by $\frac{x_n^2}{2}$ we get
$$
\frac{x_n^2}{2}\big(i_X(dw)\big)_{dx_m}
= x_m\sum_{k\ne m} a_k X_k - a_m\sum_{k\ne m} x_k X_k -\frac{(X_m+a_m x_n^2)X_n}{x_n}.
$$
On the other hand, by multiplying \eqref{eq1} by $-1/2$ we get
$$
-\frac{1}{2}\partial_{x_m}S
= -X_m H + x_m\sum_{k\ne m} a_k X_k - a_m\sum_{k\ne m} x_k X_k - a_m x_n X_n.
$$
The difference between the two sides is
$$
\frac{x_n^2}{2}\big(i_X(dw)\big)_{dx_m} + \frac{1}{2}\partial_{x_m}S
= X_m H - \frac{X_m X_n}{x_n}.
$$
But $X_n=H x_n$, so $X_n/x_n=H$ and the difference is zero. Hence
$$
\big(i_X(dw)\big)_{dx_m} = -\frac{\partial_{x_m}S}{x_n^2}
\quad\text{for all }m=1,\dots,n-1.
$$

\textbf{Case $m=n$}
The component in front of $dx_n$ remains. We calculate directly:
\begin{itemize}
    \item the $dx_n$ component of $i_X(dw)$ is, according to the calculation in paragraph 4,
    $$
    \big(i_X(dw)\big)_{dx_n}= \sum_{i=1}^{n-1} D_i X_i
    = \sum_{i=1}^{n-1}\frac{2(X_i+a_i x_n^2)}{x_n^3}X_i
    = \frac{2}{x_n^3}\sum_{i=1}^{n-1} X_i^2 + \frac{2}{x_n} \sum_{i=1}^{n-1} a_i X_i.
    $$
    \item the $dx_n$ component of $-\dfrac{dS}{x_n^2}+2\dfrac{S}{x_n^3}dx_n$ is
    $$
    -\frac{\partial_{x_n}S}{x_n^2}+2\frac{S}{x_n^3}.
    $$
\end{itemize}
But
$$
\partial_{x_n}S = 2\sum_{k=1}^n X_k\partial_{x_n}X_k
=2\sum_{i=1}^{n-1} X_i\partial_{x_n}X_i + 2X_n\partial_{x_n}X_n.
$$
For $i\le n-1$ we have $\partial_{x_n}X_i = -a_i x_n$ (derivative of the term $-\tfrac{a_i}{2}x_n^2$ present in the sum $\sum_{j\ne i}x_j^2$), and $\partial_{x_n}X_n=H$. So
$$
\partial_{x_n}S = -2x_n\sum_{i=1}^{n-1} a_i X_i + 2X_n H.
$$
Hence
$$
-\frac{\partial_{x_n}S}{x_n^2}+2\frac{S}{x_n^3}
= \frac{2}{x_n^3}\sum_{i=1}^{n-1} X_i^2 + \frac{2}{x_n} \sum_{i=1}^{n-1} a_i X_i,
$$
since $X_n H = X_n\cdot\frac{X_n}{x_n}=\dfrac{X_n^2}{x_n}$ and the terms rearrange to give exactly the same expression as for $\big(i_X(dw)\big)_{dx_n}$.
Thus we also have
$$
\big(i_X(dw)\big)_{dx_n} = -\frac{\partial_{x_n}S}{x_n^2} + 2\frac{S}{x_n^3}.
$$
For each component $m=1,\dots,n$ we have shown
$$
\big(i_X(dw)\big)_{dx_m} = -\frac{\partial_{x_m}S}{x_n^2} \quad (m\le n-1),
\qquad
\big(i_X(dw)\big)_{dx_n} = -\frac{\partial_{x_n}S}{x_n^2}+2\frac{S}{x_n^3}.
$$
Equivalently
$$
i_X(dw) = -\frac{dS}{x_n^2}+2\frac{S}{x_n^3}\,dx_n.
$$
Therefore
$$
\mathcal L_X w = d\left(\frac{S}{x_n^2}\right) + i_X(dw) = \frac{dS}{x_n^2}-2\frac{S}{x_n^3}dx_n
-\frac{dS}{x_n^2}+2\frac{S}{x_n^3}dx_n =0.
$$
Thus $\boxed{\mathcal L_X w =0}$.

\end{proof}
\begin{lemma}\label{l}
Following the Theorem \ref{tn}, the vector field $X$ is not the Reeb field the contact form $\omega$. 
\end{lemma}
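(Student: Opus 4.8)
The plan is to reproduce, in the general odd-dimensional setting $\mathbb{H}^{2n+1}$, the mechanism already used for $n=1$ in Lemma \ref{r1}. Recall that the Reeb vector field $R$ of a contact form $\omega$ is the \emph{unique} field satisfying simultaneously $\omega(R)=1$ and $i_R\,d\omega=0$. Consequently, to prove that $X$ is not the Reeb field of $\omega$ it is enough to show that $i_X\,d\omega$ does not vanish identically; indeed, exhibiting one point and one tangent vector at which $i_X\,d\omega\neq 0$ already rules $X$ out.

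First I would invoke Proposition \ref{PP1}, which gives $\mathcal L_X\omega=0$. Combined with Cartan's formula $\mathcal L_X\omega=d(i_X\omega)+i_X\,d\omega$, this yields the identity $i_X\,d\omega=-\,d(i_X\omega)$. The contraction $i_X\omega$ was computed explicitly in the proof of Proposition \ref{PP1}: one has $i_X\omega=\dfrac{S}{x_{2n+1}^{2}}$ with $S:=\sum_{i=1}^{2n+1}X_i^{2}$. Hence $i_X\,d\omega=-\,d\!\left(\dfrac{S}{x_{2n+1}^{2}}\right)$.

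Next I would test this $1$-form against the coordinate field $\partial_{x_{2n+1}}$, obtaining
$$
i_X\,d\omega\big(\partial_{x_{2n+1}}\big)=-\,\partial_{x_{2n+1}}\!\left(\frac{S}{x_{2n+1}^{2}}\right).
$$
It then remains to check that the right-hand side is not identically zero. Freezing the coordinates $x_1,\dots,x_{2n}$ and viewing $S/x_{2n+1}^{2}$ as a rational function of $x_{2n+1}$ alone, it has a pole of order two at $x_{2n+1}=0$ precisely when $S\big|_{x_{2n+1}=0}\neq 0$; since $S\big|_{x_{2n+1}=0}=\sum_{i=1}^{2n}\big(X_i\big|_{x_{2n+1}=0}\big)^2$ is a non-trivial polynomial in $x_1,\dots,x_{2n}$ (it vanishes identically only if $X$ is constant, which is excluded), this holds for a generic choice of the frozen coordinates. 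Thus $S/x_{2n+1}^{2}$ is non-constant in $x_{2n+1}$, its $x_{2n+1}$-derivative is nonzero somewhere, so $i_X\,d\omega\neq 0$ and $X$ cannot be the Reeb field of $\omega$.

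The only point requiring a little care is the final non-vanishing claim: one must be sure that $S=X_1^{2}+\dots+X_{2n+1}^{2}$ genuinely depends on $x_{2n+1}$. This is essentially automatic, since each $X_i$ with $i\le 2n$ contains the term $-\tfrac{a_i}{2}x_{2n+1}^{2}$ and $X_{2n+1}=Hx_{2n+1}$, so $S$ is a non-trivial polynomial in $x_{2n+1}$ whenever $X$ is non-constant — and the contact hypothesis of Theorem \ref{tn} already forces the coefficients $a_k,c_k$ not to all vanish. I do not expect any genuine obstacle beyond this bookkeeping; the argument is a verbatim generalization of Lemma \ref{r1}.
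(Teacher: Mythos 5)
Your proposal is correct and follows exactly the route the paper intends: its proof of this lemma is just the remark ``use the same method as Lemma \ref{r1}'', i.e.\ invoke $\mathcal{L}_X\omega=0$ from Proposition \ref{PP1} to get $i_X d\omega=-d(i_X\omega)=-d\bigl(S/x_{2n+1}^2\bigr)$ and observe this is not identically zero, which is precisely what you do. Your only addition is to spell out carefully (via the pole at $x_{2n+1}=0$) why $S/x_{2n+1}^2$ genuinely depends on $x_{2n+1}$ for non-constant $X$, a detail the paper leaves implicit.
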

\begin{proof}
  For the proof, we use the same method as that of Lemma \ref{r1}.
\end{proof}
\begin{theorem}
Let \(X\in \Gamma_{2n+1}\) be such that its dual form \(\omega\) is a contact form. Then \[T\mathbb{H}^{2n+1}=\ker(w)\oplus \text{span}(X).\]
\end{theorem}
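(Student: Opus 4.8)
The plan is to identify $\ker(\omega)$ with the metric-orthogonal complement of $X$ and then invoke positive-definiteness of the hyperbolic metric. First I would recall from \eqref{bmol}, applied to the components of $X$ in \eqref{X2n} and of $\omega$ in Remark \ref{R2}, that $\omega$ is the $g$-dual of $X$, i.e. $\omega(Y)=g(X,Y)$ for every tangent vector $Y$, where $g$ is the hyperbolic metric. Consequently, at each point $p$,
$$\ker(\omega)_p=\{\,Y\in T_p\mathbb{H}^{2n+1}\;:\;g_p(X_p,Y)=0\,\}=(X_p)^{\perp_g},$$
the $g$-orthogonal complement of $X_p$, while evaluating on $X$ itself gives $\omega(X)=g(X,X)=x_{2n+1}^{-2}\sum_{i=1}^{2n+1}X_i^2$.

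Next I would show $X$ is nowhere zero. Since $\omega$ is a contact form, $\omega\wedge(d\omega)^n$ is a volume form, hence nowhere vanishing, which forces $\omega_p\neq0$ for all $p$; as $\omega=g(X,\cdot)$ and $g$ is non-degenerate, this is incompatible with $X_p=0$. Positive-definiteness of $g$ then upgrades ``$X_p\neq0$'' to $g_p(X_p,X_p)>0$, i.e. $\omega(X)>0$ everywhere, so in particular $X_p\notin\ker(\omega)_p$ at every point.

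Finally I would assemble the decomposition. For a positive-definite metric $g$ and a nowhere-zero vector field $X$ one has the pointwise orthogonal splitting $T_p\mathbb{H}^{2n+1}=\mathbb{R}X_p\oplus(X_p)^{\perp_g}$, which by the first step is precisely $T_p\mathbb{H}^{2n+1}=\text{span}(X_p)\oplus\ker(\omega)_p$; a dimension count ($1+2n=2n+1$) confirms this is a direct sum filling the whole tangent space. Smoothness of $X$ (nowhere zero) and of $\omega$ (nowhere zero, being contact) makes both summands smooth subbundles, so this is a genuine bundle decomposition $T\mathbb{H}^{2n+1}=\ker(\omega)\oplus\text{span}(X)$. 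I would close by recalling Lemma \ref{l}: since $X$ is not the Reeb field of $\omega$, this complement to the contact hyperplane field is the metrically natural one rather than the tautological Reeb line.

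The only genuine obstacle is the nowhere-vanishing of $X$: being merely non-constant does not preclude zeros, so the contact hypothesis must really be used — via $\omega=g(X,\cdot)$ and the fact that a contact form never vanishes — after which everything reduces to the elementary orthogonal-complement splitting for a Riemannian metric.
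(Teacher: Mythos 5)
Your proposal is correct, and although its skeleton matches the paper's---establish that $\omega(X)=g(X,X)$ is nowhere zero, then conclude the splitting---you justify the decisive step by a genuinely different (and stronger) route. The paper asserts $\omega(X)=\sum_k X_k^2/x_{2n+1}^2\neq 0$ ``since $X$ is not a constant vector field''; by itself that reasoning is insufficient, because a non-constant Killing field of hyperbolic space (an elliptic one, say) can vanish at an interior point, and a zero of $X$ is a zero of $\omega(X)$. You instead extract the non-vanishing from the contact hypothesis itself: $\omega\wedge(d\omega)^n$ is a volume form, hence $\omega_p\neq 0$ at every point, and by non-degeneracy (indeed positive-definiteness) of the hyperbolic metric this forces $X_p\neq 0$ and $g_p(X_p,X_p)>0$ everywhere; this is exactly the hypothesis that should be doing the work, and it is consistent with Theorem \ref{tn}, where $\omega\wedge(d\omega)^n$ is a nowhere-vanishing multiple of the coordinate volume precisely when $\det(M)\neq 0$. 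You then finish with the elementary pointwise orthogonal splitting $T_p\mathbb{H}^{2n+1}=\mathbb{R}X_p\oplus (X_p)^{\perp_g}$ together with the identification $(X_p)^{\perp_g}=\ker(\omega)_p$, rather than appealing to Lemma \ref{l} as the paper does; this is cleaner, since the non-Reeb property plays no role in the transversality of $X$ to $\ker(\omega)$---any vector field with $\omega(X)\neq 0$ everywhere splits the tangent bundle in this way, Reeb or not. In short: same pivot ($\omega(X)\neq 0$ everywhere), but your argument closes a real gap in the paper's justification of that pivot and dispenses with an unnecessary appeal to the Reeb-field lemma, at the modest price of invoking the metric-duality $\omega=g(X,\cdot)$ explicitly.
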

\begin{proof}
Let \(X=\sum\limits_{k=1}^{2n+1}X_k\partial x_k\in \Gamma_{2n+1}\) be such that its dual form \(\omega\) is a contact form. We have: \(w(X)=\sum\limits_{k=1}^{2n+1}\frac{X^2_k}{x_{2n+1}^2}\ne 0\), since \(X\) is not a constant vector field. Lemma \ref{l} ensures that \(T\mathbb{H}^{2n+1}=\ker(w)\oplus \text{span}(X).\)
\end{proof}

\end{document}